\providecommand{\U}[1]{\protect\rule{.1in}{.1in}}
\newtheorem{theorem}{Theorem}
\newtheorem{assumption}[theorem]{Assumption}
\newtheorem{corollary}[theorem]{Corollary}
\newtheorem{definition}[theorem]{Definition}
\newtheorem{lemma}[theorem]{Lemma}
\newtheorem{proposition}[theorem]{Proposition}
\newtheorem{remark}[theorem]{Remark}
\newenvironment{proof}[1][Proof]{\noindent\textbf{#1.} }{\ \rule{0.5em}{0.5em}}
\renewcommand{\P}{\mathbf{P}}
\renewcommand{\tilde}{\widetilde}
\renewcommand {\epsilon}{\varepsilon}
\newcommand{\EE}{\mathbb{E}}
\newcommand{\HH}{\mathbb{H}}
\newcommand{\LL}{\mathbb{L}}
\newcommand{\NN}{\mathbb{N}}
\newcommand{\PP}{\mathbb{P}}
\newcommand{\RR}{\mathbb{R}}
\newcommand{\fF}{\mathcal{F}}
\begin{document}

\title{Non-local Conservation Law  from Stochastic  Particle Systems}
\author{Marielle Simon\thanks{INRIA Lille - Nord Europe, MEPHYSTO Team, 
France \emph{and} Laboratoire Paul Painlev\'e, CNRS UMR 8524. E-mail: \textsl{marielle.simon@inria.fr}} , Christian
Olivera\thanks{Departamento de Matem\'{a}tica, Universidade Estadual de
Campinas, Brazil. E-mail: \textsl{colivera@ime.unicamp.br}.}}
\date{}
\maketitle

\begin{abstract}
 We consider an interacting particle system in $\mathbb{R}^d$ modelled as a system of $N$ stochastic differential equations.  The limiting behaviour as the size $N$ grows to infinity is achieved as a
law of large numbers for the empirical density process associated with the interacting particle system.
\end{abstract}

\noindent \textit{{\bf Key words and phrases:} Stochastic differential equations; Fractal conservation law, L\'{e}vy process;
  Particle systems; Semi-group approach.}

\vspace{0.3cm} \noindent {\bf MSC2010 subject classification:} 60H20, 60H10, 60F99.

\section{Introduction\label{Intro}}

\subsection{Context}

There is a vast and growing interest in modeling systems of large (though still finite) population of individuals subject to mutual interaction and random dispersal (due to, for instance, the environment). We refer the reader to \cite{capasso} for a recent textbook on the subject.  More precisely, the behavior of such systems is often described as the limit of the number of individuals tends to infinity. While at the \emph{microscopic} scale, the population is well modeled by stochastic differential equations (SDEs),  the  \emph{macroscopic} description of the population densities is provided by partial differential equations (PDEs), which can be of different types, for instance linear PDEs for Black-Scholes models, or non-linear PDEs for density-dependent diffusions. All these systems may characterize the collective behavior of individuals in biology models, but also agents in economics and finance. The range of application of this area is huge. 

In the present paper the limit processes that we want to obtain belong to the family of non-local PDEs, which in our case are related to anomalous diffusions. For that purpose, we study the asymptotic behaviour of a system of particles
which interact \emph{moderately}, i.e.~an intermediate situation between weak and strong
interaction, and which are submitted to random scattering. It is well-known that in the case where the particles interact with only few others (which is often the most realistic case), the results can be qualitatively different, but also mathematically more challenging. Nowadays, there are few rigorous results at disposal, starting from the seminal paper \cite{Var}.

The Lagrangian description of our dynamics of the moderately-interacting particles 
is given via  a  system of stochastic differential equations. Suppose that
 for each $i\in\mathbb{N}$, the process $X_{t}^{i,N}$ satisfies the system of coupled stochastic differential equations in $\mathbb{R}^{d}$
\begin{equation}
dX_{t}^{i,N}=F\bigg(X_{t}^{i,N},\frac{1}{N}\sum_{k=1}^{N}V^{N}\big(X_{t}^{i,N}%
-X_{t}^{k,N}\big)\bigg)\ dt+dL_{t}^{i},\label{itoass}%
\end{equation}
where $\{L_{t}^{i}\}_{i\in\mathbb{N}}$ are independent $\mathbb{R}^{d}$-valued symmetric $\alpha$-stable   L\'{e}vy processes on a filtered probability space $\left(
\Omega,\mathcal{F},\mathcal{F}_{t},\mathbb{P}\right)$, and the functions $V^N:\RR^d\to\RR_+$ and $F:\RR^d\times \RR_+ \to \RR^d$ are continuous and will be specified in the next paragraph.

 Thus we are assuming that the  system of $N$ particles is subject to random
dispersal, modelled as $\alpha$-stable L\'{e}vy processes. In this model randomness may be due to external sources, for instance unpredictable irregularities
of the environment (like obstacles, changeable soils, varying visibility). The moderate interaction is represented by the form of $V^N$ which writes as
\[ V^N(x) = N^\beta V(N^{\frac{\beta}{d}}x),\]
for some function $V:\RR^d\to\RR_+$ and $\beta \in (0,1)$. The case $\beta=0$ would correspond to the long range interaction, and the case $\beta=1$ to the nearest neighbor interaction. Further hypotheses on $V$ and $\beta$ are given in Assumption \ref{assump} below. 

We are interested in the  bulk behaviour of the whole population  of the  particles, 
and therefore a natural object for mathematical investigation is the 
probability measure-valued empirical process, defined as follows: let \[S_{t}^{N}:=\frac{1}{N}\sum_{i=1}^{N}\delta_{X_{t}^{i,N}}\] be the empirical 
process associated to  $\{X_{t}^{i,N}\}_{i=1,...,N}$, where
$\delta_{a}$ is the delta Dirac measure concentrated at $a$.
The drift term which appears in \eqref{itoass} describes the interaction of the $i$-th particle located at $X_t^{i,N}$
 with the random field $S_t^N$ generated by the whole system of particles at time $t$.The usual procedure includes the following steps: 
\begin{itemize}
\item show the convergence of the process $S^N$ to a deterministic measure process $S^\infty$;
\item and then, identify $S^\infty$ as the weak solution to some suitable PDE.
\end{itemize}

  In our case the
dynamics of the empirical measure is fully determined by  It\^{o}'s formula: when we apply it to $\phi\big(  X_{t}^{i,N}\big)$, for any test function $\phi\in C_{0}^{\infty}(\mathbb{R}^{d})$ which is smooth and compactly supported, we obtain that the empirical measure $S_{t}^{N}$ satisfies%
\begin{align}
\left\langle S_{t}^{N},\phi\right\rangle &=\left\langle S_{0}^{N}%
,\phi\right\rangle +\int_{0}^{t}\Big\langle S_{s}^{N},F\big(\cdot \;,\;(V^{N}\ast
S_{s}^{N})\big)\;\nabla\phi\Big\rangle\; ds \notag \\
&+ \frac{1}{2}\int_{0}^{t}\left\langle S_{s}^{N}, \mathcal{L} \phi\right\rangle
ds+   \frac{1}{N}\sum_{i=1}^{N}\int_{0}^{t} \int_{\RR^{d}-\{0\}} \big(\phi(X_{s_{-}}^{i,N}+z) - \phi(X_{s_{-}}^{i,N})\big)\;   d{\mathcal N}^{i}(dsdz)
\label{ident fro S^N},
\end{align}
where $\mathcal L$ is the non-local operator corresponding to a symmetric $\alpha$-stable L\'evy process,  defined as
\begin{equation}\label{OpeL}
\mathcal{L}\phi (x) =  \int_{\RR^{d}-\left\{  0 \right\}}  \big(\phi(x+ z)-\phi(x) -{\bf 1}_{\{|z|\leq 1\}} \nabla\phi(x)\cdot z\big) \;  d\nu(z),
\end{equation}
and $(\nu,{\mathcal N})$ are characteristics coming from the L\'evy-It\^o decomposition Theorem, which satisfy some conditions related to $\alpha \in (1,2)$, see Section \ref{ssec:levy} below for more details. As a example which satisfies the conditions of our main result, let us give the  fractional Laplacian $-(-\Delta)^{\frac{\alpha}{2}}$,  for some $\alpha \in (1,2)$, which corresponds to \eqref{OpeL} with 
\[ d\nu(z) = \frac{K_\alpha \; dz}{|z|^{1+\alpha}}.\]
Our interest lies in the investigation of the behaviour of the dynamics of the
processes $t\mapsto S_{t}^{N}$ in the limit  $N\rightarrow\infty$.  This kind of  problems
was considered  by Oelschl\"{a}ger \cite{Oel1},  Jourdain and  M\'el\'eard  \cite{Mela},
 M\'el\'eard and Roelly-Coppoletta \cite{MelRoelly},
when the sytem of SDE is driven by standard Brownian motions (see also the introduction in \cite{flan}). Up to our knowledge, the case of L\'evy processes driven dynamics has not been fully investigated in the literature. Let us now state the main result of this paper.

\subsection{Assumptions and main result} \label{ssec:mainresult}
 
%

In the following we denote by $\|\cdot\|_{\LL^p}$ the usual $\LL^p$-norm on $\RR^d$, and by $\|\cdot\|_{\LL^p\to \LL^q}$ the usual operator norm. Moreover, for any $\epsilon \in \RR$,  we denote by $\HH^\epsilon=\HH^\epsilon(\RR^d)$ the usual 
Bessel space of all
functions $u\in \LL^{2}(  \mathbb{R}^{d})  $ such that
\[
\left\Vert u\right\Vert _{\HH^{\epsilon}}^{2}:=\Big\Vert \mathcal{F}^{-1}\big[(1+|\lambda|^{2}\big)^{\frac{\epsilon}{2}%
} \mathcal{F}(u)(\lambda)\big]\Big\Vert _{\LL^{2}  }
^{2}<\infty
\]
where $\mathcal{F}$ denotes the Fourier transform of $u$.

For every $\epsilon \in\mathbb{R}$, the Sobolev spaces $\mathbb{W}^{\epsilon ,2}(\mathbb{R}^{d})  $ are well defined, see \cite{Triebel} for the material
needed here. For positive $\epsilon $ the restriction of $f\in \mathbb{W}^{\epsilon 
,2}(  \mathbb{R}^{d})  $ to a ball $\mathcal{B}(  0,R) \subset \mathbb{R}^d  $ is in
$\mathbb{W}^{\epsilon ,2}(  \mathcal{B}(  0,R)  )  $.   The Sobolev spaces $\HH^{\epsilon}$  and $\mathbb{W}^{\epsilon ,2}(\mathbb{R}^{d})  $   have equivalent norms. Let us list below our assumptions which we use to derive the macroscopic limit:
\begin{assumption} Take $\alpha \in (1,2)$. \label{assump}
We assume that there exists a  continuous probability density  $V:\RR^d\to \RR_+$   such that
\begin{itemize}
\item $V$ is compactly supported and symmetric,
\item  $V^{N}(x)=N^{\beta}V(N^{\frac{\beta}{d}}x)$ for some $\beta >0$,
\item $V \in \HH^{\epsilon + \delta}(\RR^d)$, for some $\varepsilon,\delta >0$,
\end{itemize}
There are further conditions on $(\beta,\varepsilon,\delta)$. For a technical reason that we will appear later (more precisely in Lemma \ref{lemma martingale 1}), we need to assume that $\epsilon$ satisfies 
\begin{equation} \frac d 2 < \epsilon < \frac{(1-\beta)d}{2\beta}-\Big(1-\frac{\alpha}{2}\Big) \label{eq:main}\end{equation}
and that $\delta$ satisfies
\begin{equation}
\label{eq:delta} 
1-\frac{\alpha}{2} < \delta \leq \frac{(1-\beta)d}{2\beta}-\epsilon.
\end{equation}
This is our main technical assumption. 
The inequality which appears in \eqref{eq:main} gives an extra condition on $\beta$ which reads as
\[ 0 < \beta < \frac{1}{2+\frac{2-\alpha}{d}}.\] 
Let us note that the sequence $\{V^N\}_N$ is a family of \emph{mollifiers} which will allow us to introduce a mollified version of the empirical density (see below).  Moreover we assume: 
\begin{itemize}
\item $F\in \mathbb L^{\infty}( \mathbb{R}^{d}\times\mathbb{R})  \cap
{\rm Lip}(\mathbb{R}^{d}\times\mathbb{R}).$
\item The sequence of measures $\{S_0^N\}_N$  converges weakly to $u_0(\cdot)dx$ in probability, as $N\to\infty$, where $u_0 \in \mathbb{L}^1(\mathbb{R}^d)$.
\item The sequence of mollified initial measures $\{V^N \ast S_0^N\}_N$ is uniformly bounded in the Sobolev space $\mathbb{H}^\epsilon$, namely: 
\[
\sup_{N \in \mathbb{N}}\big\|  V^{N}\ast S_0^{N}  \big\|_{\HH^{\epsilon}}  <\infty.\]
\end{itemize}
\end{assumption}

Let us introduce the \emph{mollified empirical measure} (the theoretical analogue of
the numerical method of kernel smoothing) $g_{t}^{N} $ defined as%
\[
g_{t}^{N}(x)=\left(  V^{N}\ast S_{t}^{N}\right)  \left(  x\right), \qquad x \in \RR^d .
\]
We are now ready to state our main result:

\begin{theorem}
\label{Thm 1}  We assume Assumption \ref{assump}. Then, for every
$\eta \in( \frac d 2,\epsilon)$, the sequence of processes $\{(g_t^{N})_{t\in[0,T]}\}_N$ converges in probability with respect to the 
\begin{itemize}
\item \emph{weak star topology of $\LL^{\infty}\left(  [0,T]\; ;\;\LL^{2}(
\mathbb{R}^{d})\right)   $, }

\item \emph{weak topology of $\LL^{2}\left(  [0,T]\; ;\; \HH^{\epsilon}(
\mathbb{R}^{d})  \right)  $ }

\item \emph{strong topology of $\LL^{2}\left( [ 0,T]\; ;\; \HH_{\rm loc}^{\eta}(
\mathbb{R}^{d})  \right)  $ }
\end{itemize}
as $N\rightarrow\infty$, to the unique weak solution of the  non-local PDE
\begin{equation}
\partial_{t}u(t,x)+\mathrm{div}(F(x,u)u)- \mathcal{L} u(t,x)=0\,, \qquad
u|_{t=0}=u_{0}, \label{PDEintr}%
\end{equation}
where $ \mathcal{L}$ has been defined in \eqref{OpeL} and is the operator of a symmetric $\alpha$-stable L\'evy process. Namely, for all $\phi\in C_{0}^{\infty}(\mathbb{R}^{d})$ it holds
\begin{equation}\label{limiteq}
\left\langle u(t,\cdot),\phi\right\rangle =\left\langle u_{0},\phi\right\rangle
+\int_{0}^{t}\left\langle u,F(\cdot,u)\nabla\phi\right\rangle ds+\frac{1}{2}%
\int_{0}^{t}\left\langle u,\mathcal{L}\phi\right\rangle ds.
\end{equation}

\end{theorem}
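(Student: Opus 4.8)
The plan is to adapt the by-now classical strategy for moderately interacting diffusions (Oelschl\"ager, Jourdain--M\'el\'eard) to the non-local generator $\mathcal L$ and to the jump noise. First I would derive a closed evolution equation for the mollified field $g^N_t=V^N\ast S^N_t$: convolving \eqref{ident fro S^N} with $V^N$, equivalently testing it against $V^N(x-\cdot)$ for each fixed $x$ and using the symmetry of $V$, gives
\[
g^N_t=V^N\ast S^N_0+\int_0^t V^N\ast\mathrm{div}\!\big(S^N_s\,F(\cdot,g^N_s)\big)\,ds+\tfrac12\int_0^t\mathcal L g^N_s\,ds+M^N_t,
\]
where $M^N_t$ is an $\LL^2(\RR^d)$-valued martingale built from the compensated Poisson measures $\widetilde{\mathcal N}^i=\mathcal N^i-ds\,d\nu$; since the $L^i$ are i.i.d.\ its predictable bracket carries a prefactor $1/N$.

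Next I would establish uniform-in-$N$ energy bounds by applying the It\^o formula for jump processes to $\|g^N_t\|_{\HH^\eta}^2$ and to $\|g^N_t\|_{\LL^2}^2$. The non-local term produces the favourable dissipation $-c\,\|g^N_s\|_{\HH^{\eta+\alpha/2}}^2$ up to lower-order symmetric terms, since $\mathcal L$ differs from $-(-\Delta)^{\alpha/2}$ only by a lower-order operator; the drift term is rewritten, moving $V^N$ onto $g^N_s$, as $-\langle S^N_s\,F(\cdot,g^N_s),\nabla(V^N\ast g^N_s)\rangle$, bounded via $\|F\|_{\LL^\infty}$ and by distributing the gradient onto $V^N$, which creates powers of $N^{\beta/d}$ that must be absorbed into the dissipation and into $\|g^N_s\|_{\HH^\epsilon}$ — this is precisely where the constraints \eqref{eq:main}--\eqref{eq:delta} on $(\beta,\epsilon,\delta)$ and the hypothesis $V\in\HH^{\epsilon+\delta}$ are used, by the mechanism of Lemma \ref{lemma martingale 1}; the jump part of the It\^o expansion reproduces the bracket of $M^N$, of size $O(1/N)$ after the same estimates. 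Taking expectations, using Gronwall and the hypothesis $\sup_N\|V^N\ast S^N_0\|_{\HH^\epsilon}<\infty$, this yields
\[
\sup_N\E\Big[\sup_{t\le T}\|g^N_t\|_{\LL^2}^2+\int_0^T\|g^N_s\|_{\HH^\epsilon}^2\,ds\Big]<\infty,
\]
and, separately, $\E\big[\|M^N_T\|_{\HH^{-\theta}}^2\big]\to 0$ as $N\to\infty$ for a suitable $\theta>0$.

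From the evolution equation I then read off a uniform bound on the time increments of $g^N$ in a negative-order Sobolev space on balls; together with the $\LL^2([0,T];\HH^\epsilon)$ bound, an Aubin--Lions--Simon compactness argument gives tightness of the laws of $\{(g^N_t)_{t\in[0,T]}\}_N$ for the strong topology of $\LL^2([0,T];\HH^\eta_{\mathrm{loc}})$, $\eta<\epsilon$, and relative compactness for the weak-$\ast$ topology of $\LL^\infty([0,T];\LL^2)$ and the weak topology of $\LL^2([0,T];\HH^\epsilon)$. Along a subsequence $g^N\to u$ in probability in these topologies, hence strongly in $\LL^2_t\LL^2_{x,\mathrm{loc}}$; moreover, since $\epsilon>d/2$, one controls $\int_0^T\|g^N_s\|_{C^{0,\epsilon-d/2}}^2\,ds$ uniformly. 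Because $V^N$ is an approximate identity and $S^N_0\rightharpoonup u_0\,dx$, testing against $C_0^\infty(\RR^d)$ shows $S^N_t\rightharpoonup u(t,\cdot)\,dx$. I then pass to the limit in \eqref{ident fro S^N}: the martingale term has second moment $O(1/N)$ and vanishes; the initial and non-local terms pass by weak convergence of $S^N_s$ and boundedness of $\phi$ and $\mathcal L\phi$; for the nonlinear term I split $\langle S^N_s,F(\cdot,g^N_s)\nabla\phi\rangle=\langle S^N_s-g^N_s,\psi^N_s\rangle+\langle g^N_s,\psi^N_s\rangle$ with $\psi^N_s=F(\cdot,g^N_s)\nabla\phi$, observe that the first bracket equals $\langle S^N_s,\psi^N_s-V^N\ast\psi^N_s\rangle$, which tends to $0$ because the uniform H\"older control of $g^N_s$ provides an equicontinuity modulus for $\psi^N_s$ that beats the shrinking support of $V^N$, and that in the second bracket $F(\cdot,g^N_s)\to F(\cdot,u)$ strongly in $\LL^2_t\LL^2_{x,\mathrm{loc}}$ by the Lipschitz hypothesis, so its product with $g^N_s\to u$ converges; this identifies $u$ as a weak solution of \eqref{limiteq}.

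Uniqueness of such weak solutions in the energy class is obtained by testing the difference of two solutions against itself, using the dissipativity of $\mathcal L$, the Lipschitz bound on $F$ to control $\mathrm{div}(F(x,u_1)u_1-F(x,u_2)u_2)$, and Gronwall; this upgrades the subsequential convergence to convergence of the whole sequence in probability. The hard part is the energy estimate of the second step: controlling $V^N\ast\mathrm{div}(S^N_s F(\cdot,g^N_s))$ and the bracket of $M^N$, since convolving against $V^N$ and differentiating generates powers of $N$ that can be absorbed only under the sharp balance between $\beta,\epsilon,\delta$ and $\alpha$ encoded in \eqref{eq:main}--\eqref{eq:delta} — this is the content of Lemma \ref{lemma martingale 1}. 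A secondary difficulty is securing the correct time-regularity for the Aubin--Lions argument in the presence of jump noise.
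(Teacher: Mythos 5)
Your overall architecture (closed equation for $g^N$, uniform bounds, Aubin--Lions compactness, Skorokhod, identification of the limit, uniqueness, Gyongy--Krylov) matches the paper's, and your treatment of the compactness, of the nonlinear term in the passage to the limit (replacing $S^N_s$ by $g^N_s$ via the approximate identity and the uniform H\"older control coming from $\HH^{\eta}\hookrightarrow C^{0,\eta-d/2}$), and of uniqueness are all sound in outline. The genuine gap is in your central step, the a priori bound $\E\int_0^T\|g^N_s\|_{\HH^{\epsilon}}^2\,ds\le C$. In your It\^o/energy computation the drift contributes, after integration by parts, a term of the form $\langle \nabla (I-A)^{\epsilon}g^N_s,\;V^N\ast(F(\cdot,g^N_s)S^N_s)\rangle$. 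The only control you have on $V^N\ast(FS^N_s)$ is the \emph{pointwise} bound $|V^N\ast(FS^N_s)|\le\|F\|_{\infty}\,g^N_s$, and pointwise domination does not control positive-order Sobolev norms; estimating instead via $\|V^N\ast(FS^N_s)\|_{\HH^{\sigma}}\le\|F\|_{\infty}\|V^N\|_{\HH^{\sigma}}\sim N^{\beta/2+\sigma\beta/d}$ produces a divergent factor with no compensating $1/N$ (unlike the martingale bracket). Your proposed fix --- absorbing these powers of $N$ ``into the dissipation'' --- cannot work: the coercivity of $\mathcal L$ gives $-c\|g^N_s\|_{\HH^{\epsilon+\alpha/2}}^2$ with an $N$-independent constant $c$, so it can absorb at most an $N$-independent multiple of a lower-order norm; a Gronwall constant growing like $N^{\gamma}$ destroys the estimate.

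This is precisely the point the paper's argument is built to circumvent. Instead of an energy identity, it writes the equation in \emph{mild} form with the semigroup $e^{(t-s)\mathcal L}$, and uses two facts: (i) the smoothing bound $\|(I-A)^{1/2}e^{(t-s)\mathcal L}\|_{\LL^2\to\LL^2}\le C(t-s)^{-1/\alpha}$ (integrable since $\alpha>1$), and (ii) the positivity of the kernel of $(I-A)^{\epsilon/2}e^{h\mathcal L}$ (Lemma \ref{positi}), which converts the pointwise bound $|V^N\ast(FS^N_s)|\le\|F\|_\infty g^N_s$ into the norm bound $\|(I-A)^{\epsilon/2}e^{h\mathcal L}[V^N\ast(FS^N_s)]\|_{\LL^2}\le C\|(I-A)^{\epsilon/2}e^{h\mathcal L}g^N_s\|_{\LL^2}$, so that the drift term closes by a singular Gronwall lemma with constants independent of $N$; the powers of $N$ appear only in the martingale term, where they are beaten by the $1/N$ from independence (Lemma \ref{lemma martingale 1}). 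An energy route in the spirit of Oelschl\"ager is not impossible, but it requires a symmetrization of the double sum over particles in the drift term to expose the $1/N$ cancellation --- an argument you neither state nor sketch. As written, your key estimate does not close.
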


The literature concerning the type of equations mentioned above is immense. We will only give a partial and incomplete 
survey of some parts that we feel more relevant for this paper.  For a more complete discussion and many more
references, we refer the reader to the nice works \cite{ali,Al,Andre,Dro,Vaz}. 
 A large variety of phenomena in physics and finance are modelled by linear anomalous diffusion equations, see \cite{Vaz}. Fractional conservation laws are generalizations of convection-diffusion equations
and  appear in some physical models for over-driven detonation in gases \cite{Cla} and semiconductor growth \cite{Woy}, and in areas like dislocation dynamics, hydrodynamics, and molecular biology.

\medskip

About the propagation of chaos phenomena, let us also mention that it has recently
been studied in the context well beyond that of the Brownian motion, namely, in
the situation where the driving Brownian motions have been replaced by L\'{e}vy
processes and anomalous diffusions. We  mention  the works    \cite{Biler,Mela2,Mela3}. In
\cite{Mela2} the authors consider a  singular fractal conservation  and they construct a McKean-style non-linear process
and then use it to develop an interacting particle system whose empirical measure  strongly converges
to the solution.   In  \cite{Biler}  a weak result of this type has been obtained.  In 
\cite{Mela3} the authors   deal with an  interacting particle system whose empirical measure  strongly converges
to the solution of a one-dimensional fractional non-local conservation law via the non-linear martingale problem associated to the PDE.

\medskip

The main result of this paper is to generalize the 
propagation of chaos Theorem given by Oelschl\"{a}ger \cite{Oel1}  for systems of stochastic differential equations driven by  L\'{e}vy noise, which include non-linear terms as
\[
\int_{0}^{t}\Big\langle S_{s}^{N},F\big(\cdot\; ,\; (V^{N}\ast
S_{s}^{N})\big)\; \nabla\phi\Big\rangle ds
\]
where $\phi$ is a smooth test function. Since $S_{t}^{N}$ converges only
weakly, it is required that $V_{N}\ast S_{t}^{N}$ \textit{converges
uniformly}, in the space variable, in order to pass to the limit. Maybe in
special cases one can perform special tricks but the question of uniform
convergence is a natural one in this problem and it is also of independent
interest, hence we investigate when it holds true. Notice that the moderate interaction assumption in \cite{Oel1} reads as $\beta \in (0,\frac{d}{d+2})$  whereas here we obtain $\beta \in (0,\frac{d}{2-\alpha + 2d})$, where $\alpha$ is one of the main characteristics of our L\'evy process. 
The case $\beta=1$ is much more challenging, and up to our knowledge, not solved for the time being.

Finally, we mention that 
 our source of inspiration was the paper \cite{flan}  where the authors use a semi-group approach in order to study the propagation of chaos for a system of Brownian particles with proliferation, and there they obtain the condition $\beta \in (0,\frac12)$, which already improved the one of \cite{Oel1}. The differences mainly rely on martingale estimates which, in the present case, are of L\'evy type, and therefore contain jumps. We keep the semi-group approach exposed in \cite{flan}, but all our main technical lemmas involve new analytic tools. 

\medskip

Here follows an outline of the paper: in Section \ref{sec:preli} below we gather some well-known results that we use in the paper (with precise references for all the proofs) concerning stable L\'evy processes, semi-group properties and criteria of convergence.  In Section \ref{sec:result} we prove Theorem \ref{Thm 1},  by following three main steps: first, obtain uniform bounds for the mollified empirical measure; second, find compact embeddings to extract convergent subsequences; and third, pass to the limit and use a uniqueness result  for the solution to \eqref{PDEintr}.

\section{ Preliminaries}
\label{sec:preli}

\subsection{ Stable L\'{e}vy processes} \label{ssec:levy}

We list a collection of  definitions and  classical results that can be found in any textbook or monography on L\'{e}vy processes. We refer to \textit{Applebaum (2009)} \cite{App}, \textit{Kunita (2004)} \cite{Kunita} and \textit{Sato (2013)} \cite{Sato} where all the results and definitions presented in this section are treated.

\begin{definition}[\textbf{L\'{e}vy process}] \label{appendix: def levy process} \textit{A process $L=(L_t)_{t \geq 0}$ with values in $\RR^d$ defined on a probability space $(\Omega, \fF, \PP)$ is a \emph{L\'{e}vy process} if the following conditions are fulfilled:
\begin{itemize}
\item[1.] $L$ starts at 0 $\PP-$a.s., i.e. $\PP(L_0=0)=1;$ 
\item[2.] $L$ has independent increments, i.e. for $k \in \NN$ and $0 \leq t_0 < \dots < t_k$, 
\begin{align*}
L_{t_1} - L_{t_0},  \dots, L_{t_k} - L_{t_{k-1}} \quad \text{are independent };
\end{align*}
\item[3.] $L$ has stationary increments, i.e., for $0 \leq s \leq t$, $L_{t} - L_s $ is equal in distribution to $ L_{t-s}\; ;$
\item[4.] $L$ is stochastically continuous, i.e. for all $t \geq 0$ and $\varepsilon>0$
\begin{align*}
\displaystyle \lim_{s \rightarrow t} \PP (|L_t - L_s| > \varepsilon) =0.
\end{align*}
\end{itemize}}
\end{definition}

The reader can find the proof of the following result in \cite[Theorem 2.1.8]{App}:

\begin{proposition} Every L\'{e}vy process has a c\`{a}d-l\`{a}g modification that is itself a L\'{e}vy process.
\end{proposition}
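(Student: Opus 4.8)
The plan is to follow the classical martingale-regularisation argument. First I would use the defining properties to control the characteristic function. Writing $\phi_t(u):=\E\big[\exp(i\ip{u}{L_t})\big]$, the independent and stationary increments give the functional equation $\phi_{s+t}(u)=\phi_s(u)\,\phi_t(u)$, while stochastic continuity (convergence in probability implies convergence in law, hence of characteristic functions) forces $t\mapsto\phi_t(u)$ to be continuous. Since $\phi_0\equiv 1$ and $\phi_{t/2^n}(u)\to 1$, the relation $\phi_t(u)=\phi_{t/2^n}(u)^{2^n}$ shows $\phi_t(u)$ never vanishes, so $\phi_t(u)=\exp(t\psi(u))$ for some $\psi$ with $\psi(0)=0$; in particular $|\phi_t(u)|$ is bounded away from $0$ on each compact time interval.

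Next, for each fixed $u\in\rd$ set $Z_t^u:=\exp\big(i\ip{u}{L_t}-t\psi(u)\big)$ and let $(\fF_t)$ be the natural completed filtration of $L$. Using independence and stationarity of increments, for $s\le t$ one gets $\E[Z_t^u\mid\fF_s]=Z_s^u\,\phi_{t-s}(u)\exp(-(t-s)\psi(u))=Z_s^u$, so the real and imaginary parts of $Z^u$ are real martingales, bounded on every $[0,T]$ since $|Z_t^u|=\exp(-t\,\mathrm{Re}\,\psi(u))$. Doob's upcrossing inequality, applied to $Z^u$ restricted to increasing finite subsets of $\QQ\cap[0,T]$, then yields a full-probability event on which, for every $t\ge 0$, the one-sided limits $\lim_{\QQ\ni s\downarrow t}Z_s^u$ and $\lim_{\QQ\ni s\uparrow t}Z_s^u$ exist. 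Fixing a countable dense set $D\subset\rd$ and intersecting these events over $u\in D$ gives a set $\Om_0$ with $\P(\Om_0)=1$ on which $s\mapsto\exp(i\ip{u}{L_s})$ has one-sided limits along $\QQ$, simultaneously for all $u\in D$.

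The core step is to upgrade this to regularity of $L$ itself: for $\omega\in\Om_0$ and $\QQ\ni s_n\downarrow t$ I must show $L_{s_n}(\omega)$ converges in $\rd$. Stochastic continuity gives $L_{s_n}\to L_t$ in probability, so along a subsequence $\exp(i\ip{u}{L_{s_{n_j}}})\to\exp(i\ip{u}{L_t})$ a.s.\ for each $u\in D$; combined with existence of the limit along the full sequence this identifies $\lim_n\exp(i\ip{u}{L_{s_n}(\omega)})=\exp(i\ip{u}{L_t(\omega)})$ for all $u\in D$, for a.e.\ $\omega$. Since the point masses $\delta_{L_{s_n}(\omega)}$ have characteristic functions converging on the dense set $D$ to that of the probability measure $\delta_{L_t(\omega)}$, which is continuous at $0$, Lévy's continuity theorem rules out escape of mass to infinity; density of $D$ and injectivity of the Fourier transform on probability measures then force $L_{s_n}(\omega)\to L_t(\omega)$. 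Hence $\tilde L_t(\omega):=\lim_{\QQ\ni s\downarrow t}L_s(\omega)$ is well defined on $\Om_0$ (and $:=0$ off $\Om_0$); the analogous argument provides left limits. I expect precisely this passage — excluding oscillation and blow-up of $L_s$ from the mere regularity of the exponentials $\exp(i\ip{u}{L_s})$ — to be the main obstacle.

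Finally, it remains to verify the conclusion. Right-continuity of $t\mapsto\tilde L_t$ and existence of left limits follow from the construction by a standard diagonal argument along $\QQ$, so $\tilde L$ is càdlàg. For fixed $t$, taking $\QQ\ni s_n\downarrow t$ we have $L_{s_n}\to L_t$ in probability and $L_{s_n}\to\tilde L_t$ a.s., hence $\tilde L_t=L_t$ a.s.; thus $\tilde L$ is a modification of $L$ and has the same finite-dimensional distributions. In particular $\tilde L_0=0$ a.s., and independence of increments, stationarity of increments, and stochastic continuity are statements about finite-dimensional laws, so they transfer verbatim from $L$ to $\tilde L$. Therefore $\tilde L$ is a càdlàg Lévy process that modifies $L$.
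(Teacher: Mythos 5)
Your overall strategy --- the exponential martingales $Z^u$, Doob regularisation along the rationals for a countable dense set of frequencies, then transfer to $L$ itself --- is the classical one (essentially the argument behind the textbook reference the paper cites for this proposition), and the first and last parts of your write-up are sound. The gap sits exactly where you predicted, in the passage from regularity of $s\mapsto e^{i\ip{u}{L_s}}$ to regularity of $s\mapsto L_s$, and it is not a mere technicality. L\'evy's continuity theorem needs pointwise convergence of the characteristic functions on all of $\RR^d$, or at least on a set of full Lebesgue measure so that one may average over a small ball at the origin; convergence on a countable dense set $D$ does \emph{not} rule out escape to infinity. Concretely, in $d=1$ take $x_n=n!$ and $D=2\pi\QQ$: then $e^{iux_n}\to 1=e^{iu\cdot 0}$ for every $u\in D$, i.e.\ the characteristic functions of $\delta_{x_n}$ converge on $D$ to that of $\delta_0$, yet $x_n\to\infty$. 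So the sentence ``L\'evy's continuity theorem rules out escape of mass to infinity'' invokes the theorem under hypotheses you have not established. A second, related defect: your exclusion of explosion passes through the identification of the limit with $e^{i\ip{u}{L_t(\omega)}}$ via stochastic continuity \emph{at the particular $t$}, which costs a null set depending on $t$ and on the chosen sequence $s_n$. These cannot be intersected over uncountably many $t$, so you have not exhibited a single full-measure event on which $\lim_{\QQ\ni s\downarrow t}L_s$ exists for \emph{every} $t$ --- which is what a c\`adl\`ag modification requires.

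Both defects are repaired by standard devices; choose one. Either (a) run the martingale regularisation for every $u\in\RR^d$ and apply Fubini to $\mathrm{Leb}\times\P$ on the jointly measurable set of pairs $(u,\omega)$ where regularisation fails, obtaining one full-probability event on which, along every monotone rational sequence, $e^{i\ip{u}{L_s}}$ converges for Lebesgue-a.e.\ $u$; if $|x_{n_k}|\to\infty$ then $\frac{1}{|B_r|}\int_{B_r}e^{i\ip{u}{x_{n_k}}}\,du\to 0$, forcing the a.e.\ limit $g$ to satisfy $\int_{B_r}g=0$ for all $r$, contradicting $|g|=1$ a.e.; this argument is purely deterministic and hence valid at all $t$ simultaneously. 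Or (b) prove separately that $\sup_{s\in\QQ\cap[0,T]}|L_s|<\infty$ a.s., via the Ottaviani/L\'evy maximal inequality applied to the random walks $(L_s)_{s\in F}$ over finite $F\subset\QQ\cap[0,T]$ together with the uniform tightness of the laws $\{L_s: s\le T\}$ supplied by stochastic continuity. Once pathwise boundedness is secured, your dense-set argument correctly shows that any two subsequential limit points $y,z$ satisfy $e^{i\ip{u}{y}}=e^{i\ip{u}{z}}$ for all $u\in D$, hence for all $u$, hence $y=z$; the definition of $\tilde L$, the modification property, and the transfer of the L\'evy axioms then go through as you wrote them.
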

Due to this fact, we assume moreover that every L\'{e}vy process has  almost surely c\`{a}d-l\`{a}g paths. Following \cite[Section 2.4]{App} and  \cite[Chapter 4]{Sato} we state the \textit{L\'{e}vy-It\^{o} decomposition theorem} which characterizes  the paths of a  L\'{e}vy process in the following way.

\begin{theorem}[\textbf{L\'{e}vy-It\^{o} decomposition Theorem}] \label{appendix: thm levy ito decomposition} 

Consider $b \in \RR^d$, $\sigma$ a positive definite matrix of $\RR^{d \times d}$ and $\nu$ a measure defined on the Borelians of $\RR^d$ satisfying $\nu(\{ 0\})=0$ and $\int_{\RR^d} (1\wedge |z|^2)\nu(dz) < \infty$. 
\medskip

Then there exists a probability space $(\Omega, \fF, \PP)$ on which four independent L\'{e}vy processes exist, $L^1, L^2, L^3$ and $L^4$ with the following properties: \begin{itemize}\item $L^1_t= bt$, for all $t \geq 0$ is called a constant drift ; \item $L^2$ is a Brownian motion with covariance $\sqrt{\sigma}$ ;  \item $L^3$ is a compound Poisson process ; \item $L^4$ is a square integrable (pure jump) martingale with an a.s.~countable number of jumps of magnitude less than 1 on every finite time interval. \end{itemize}
 Hence, for $L=L^1 + L^2 + L^3+ L^4$ there exists a probability space on which  $(L_t)_{t \geq 0}$ is a L\'{e}vy process such that $\mathbb{E}\big[e^{i<\xi,L_t>}\big] =\exp(-t\psi(\xi))$ where the characteristic exponent $\psi(\xi)$ is given by 
\[
\psi(\xi)= i \langle b,\xi \rangle - \frac{1}{2} \langle\sigma \xi, \xi\rangle + \int_{\RR^d} \big(e^{i \langle \xi,z\rangle} -1 - i \langle \xi,z \rangle {\bf 1}_{\{ |z| < 1 \}} \big) \nu(dz), \quad \xi \in \RR^d.
\] 
Conversely, given a L\'{e}vy process defined on a probability space, there exists $b \in \RR^d$, a Wiener process $(B_t)_{t \geq 0}$, a covariance matrix $ \sqrt{\sigma} \in \RR^{d \times d}$ and an independent Poisson random measure $\mathcal{N}$ defined on $\RR^+ \times (\RR^d - \{ 0\})$ with intensity measure $\nu$ such that, for all $t \geq 0$,
\begin{equation}\label{appendix: eq levy ito decomposition}
L_t = bt + \sqrt{\sigma} B_t + \int_0^t  \int_{0 < |z| < 1}  z\;  \tilde{\mathcal{N}}(dsdz) + \int_0^t \int_{|z|>1} z\;  \mathcal{N}(dsdz). 
\end{equation}
More precisely, the  Poisson  random measure $\mathcal{N}$ is defined by
\[
\mathcal{N}((0,t]\times U)=\sum_{s\in (0,t]} {\bf 1}_{U} (L_s-L_{s_{-}}) \quad \text{ for any } U \in \mathcal{B}(\RR^{d}-\{0\}), \; t>0,
\]
and the compensated Poisson  random measure is given by  \[\tilde{\mathcal N}((0,t]\times U)= \mathcal{N}((0,t]\times U)- t \nu(U).\] 
\end{theorem}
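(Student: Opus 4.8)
This is a classical statement; the plan is to recall the standard argument (as in \cite{App,Sato,Kunita}) rather than to grind through every detail. The forward direction consists in splitting the path of a Lévy process $L$ into a jump part and a continuous part. First I would, for any Borel set $U$ bounded away from the origin, introduce the random measure
\[
\mathcal{N}((0,t]\times U)=\sum_{0<s\le t}\mathbf{1}_U(L_s-L_{s_-}),
\]
which is finite because $L$ has càdlàg paths. Using the independence and stationarity of the increments of $L$, one checks that $\mathcal{N}$ is a Poisson random measure on $\RR^+\times(\RR^d-\{0\})$ whose intensity has product form $ds\otimes\nu$ for a unique Borel measure $\nu$ with $\nu(\{0\})=0$; the càdlàg property forces $\nu(\{|z|>\rho\})<\infty$ for every $\rho>0$, and a summability estimate for the squares of the small jumps yields $\int_{\RR^d}(1\wedge|z|^2)\,\nu(dz)<\infty$.

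Second, I would peel off the large jumps by setting $L^3_t:=\int_0^t\int_{|z|>1}z\,\mathcal{N}(ds\,dz)$, which is a compound Poisson process since $\nu(\{|z|>1\})<\infty$. The process $Y_t:=L_t-L^3_t$ is again a Lévy process, now with jumps of size at most $1$, hence it has moments of all orders. Next, for $\eps\in(0,1)$ the compensated integrals $\int_0^t\int_{\eps<|z|<1}z\,\widetilde{\mathcal{N}}(ds\,dz)$ form a family of square-integrable martingales which, by Doob's inequality together with $\int_{|z|<1}|z|^2\,\nu(dz)<\infty$, converge in $L^2$ uniformly on compact time intervals, as $\eps\to0$, to a square-integrable pure-jump martingale $L^4$ having a.s.\ countably many jumps of magnitude $<1$ on every finite interval. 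Writing $L^1_t:=bt$ for the deterministic part, the remainder $C_t:=Y_t-L^4_t-\mathbb{E}[Y_1-L^4_1]\,t$ is then a Lévy process with a.s.\ continuous paths; partitioning $[0,t]$ and applying the central limit theorem for triangular arrays to its i.i.d.\ increments (stochastic continuity supplies the asymptotic negligibility), or equivalently invoking Lévy's characterization once its deterministic, linear-in-time quadratic variation $\langle C\rangle_t=\sigma t$ is identified, shows that $C$ is a Brownian motion with covariance $\sqrt{\sigma}$, i.e.\ $C=L^2$. Independence of $L^1,L^2,L^3,L^4$ follows from the independence of the values of $\mathcal{N}$ on disjoint sets together with the fact that $C$ is measurable with respect to a $\sigma$-field independent of the jump structure.

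Third, the characteristic exponent is assembled from the Lévy--Khintchine exponents of the four pieces: $L^1$ contributes $i\langle b,\xi\rangle$, $L^2$ contributes $-\tfrac12\langle\sigma\xi,\xi\rangle$, $L^3$ contributes $\int_{|z|>1}\big(e^{i\langle\xi,z\rangle}-1\big)\,\nu(dz)$ (exponent of a compound Poisson process), and $L^4$ contributes $\int_{|z|<1}\big(e^{i\langle\xi,z\rangle}-1-i\langle\xi,z\rangle\big)\,\nu(dz)$ (the $\eps\to0$ limit of the compensated compound Poisson exponents on annuli); multiplying the characteristic functions, by independence, gives $\mathbb{E}[e^{i\langle\xi,L_t\rangle}]=\exp(-t\psi(\xi))$ with $\psi$ as stated. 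For the converse I would run the construction in reverse: given $b$, $\sigma$ and a Poisson random measure $\mathcal{N}$ with intensity $\nu$, define $L_t$ by \eqref{appendix: eq levy ito decomposition}, where the small-jump term is once more the $L^2$-limit of the truncated compensated integrals, and verify directly that the resulting process has independent and stationary increments, is stochastically continuous, and carries the prescribed characteristic exponent.

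I expect the main obstacles to be (i) proving that the jump counting measure $\mathcal{N}$ is genuinely Poisson with a product intensity and that $\int_{\RR^d}(1\wedge|z|^2)\,\nu(dz)<\infty$, which is precisely where the càdlàg regularity and the increment structure of $L$ enter in an essential way; and (ii) showing that the continuous remainder $C$ is Gaussian, i.e.\ that a continuous Lévy process is necessarily a Brownian motion with drift. By contrast, the $L^2$-construction of the compensated small-jump martingale $L^4$ is routine once the integrability of $|z|^2$ near the origin is available.
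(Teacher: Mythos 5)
The paper does not prove this theorem: it is quoted directly from the cited references \cite{App} (Section 2.4) and \cite{Sato} (Chapter 4), and your sketch is precisely the standard argument given there --- extraction of the Poisson jump measure from the path, removal of the large jumps as a compound Poisson process, $L^2$-construction of the compensated small-jump martingale over shrinking annuli, identification of the continuous remainder as a Brownian motion with drift, and assembly of the characteristic exponent by independence. The outline is correct, and you have rightly isolated the two genuinely nontrivial points, namely that the jump measure is Poisson with product intensity satisfying $\int_{\RR^d}(1\wedge|z|^2)\,\nu(dz)<\infty$, and that the continuous part is Gaussian and independent of the jump part.
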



Throughout this paper we consider furthermore that  $(L_t)_{t\geq 0}$ is a symmetric $\alpha$-stable process for some $\alpha \in (1,2)$. We recall some facts about \emph{symmetric $\alpha$-stable processes}. These can be completely defined via their characteristic function, which is given by (see \cite{Sato} for instance)
\[
\EE\big[ e^{i<\xi, L_t>}\big]= e^{-t\psi(\xi)},
\]
where 
\begin{equation}\label{eq:psi}
\psi(\xi)= \int_{\RR^{d}} \big(1-e^{i\langle\xi,z\rangle}+ i\langle\xi,z\rangle {\bf 1}_{\{|z|\leq1\}}\big) \; d\nu(z),
\end{equation}
and the L\'{e}vy measure $\nu$ with $\nu(\left\{ 0  \right\})=0$ is given by 
\[
\nu(U)=\int_{S^{d-1}} \int_{0}^{\infty} \frac{{\bf 1}_{U}(r\theta)}{r^{\alpha+d}} \; dr \; d\mu(\theta) , 
\]
where $\mu$ is some symmetric finite measure concentrated on the unit sphere $S^{d-1}$, called \textit{spectral measure}
of the stable process $L_{t}$.  

Moreover, we assume the following additional property: for some constant $C_{\alpha}> 0$, 
\begin{equation} 
\psi(\xi)\geq C_{\alpha} |\xi|^{\alpha}, \qquad \text{ for any }\; \xi\in \RR^{d}.
\end{equation}
We remark that the above condition is equivalent to the fact that the support of the spectral measure $\mu$ is not contained in the 
proper linear subspace of $\RR^{d}$, see \cite{Priola}.
 Finally, note that the
L\'{e}vy-It\^{o} decomposition now reads as
\begin{equation}\label{levy}
L_t= \int_0^t \int_{|z|>1} z\; \mathcal{N}(dsdz) + \int_0^t  \int_{0 < |z| < 1}  z\; \tilde{\mathcal{N}}(dsdz),
\end{equation}
which corresponds to \eqref{appendix: eq levy ito decomposition} with $b=0$ and $\sigma=0$. 
Now, we recall the following well-known properties about the symmetric $\alpha$-stable processes (see for instance \cite[Proposition 2.5]{Sato} and 
\cite[Section 3]{Priola}).

\begin{proposition} Let $\mu_t$ be the law of the symmetric $\alpha$-stable process $L_t$. Then

\begin{enumerate}

\item  \emph{(Scaling property)}.    For any $\lambda>0$,  $L_t$ and $\lambda^{-\frac{1}{\alpha}}L_{\lambda t}$ have the same finite dimensional law. 
 In particular, for any $t>0$ and $A\in \mathcal{B}(\RR^{d})$,  $\mu_t(A)=\mu_1(t^{-\frac{1}{\alpha}}A)$.

\item \emph{(Existence of smooth density)}.  For any $t>0$, $\mu_t$ has a smooth density $\rho_t$ with respect to the Lebesgue measure,  which is given by
\[
\rho_t(x)=\frac{1}{(2\pi)^{d}}  \int e^{-i\langle x,\xi\rangle} \ e^{-t\psi(\xi)}   \  d\xi.
\]
Moreover $\rho_{t}(x)=\rho_{t}(-x)$ and for any $k\in \mathbb{N}$,  $\nabla^k \rho_t \in \LL^{1}(\RR^{d})$. 
\item  \emph{(Moments)}. For any $t> 0$, if $\beta< \alpha$, then $\mathbb{E}\big[|L_t|^{\beta}\big]< \infty$, and if $\beta\geq \alpha$ then
$\mathbb{E}\big[|L_t|^{\beta}\big]=\infty$.

\end{enumerate}

\end{proposition}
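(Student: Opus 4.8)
The plan is to derive all three items from the explicit structure of the characteristic exponent $\psi$ in \eqref{eq:psi}. The one algebraic fact driving everything is that $\psi$ is even and positively homogeneous of degree $\alpha$, i.e.\ $\psi(c\xi)=c^{\alpha}\psi(\xi)$ for $c>0$ and $\psi(-\xi)=\psi(\xi)$; this is the standard consequence of the $\alpha$-stable structure of $\nu$ (it can be read off, for instance, from the representation $\psi(\xi)=c_{\alpha}\int_{S^{d-1}}|\langle\xi,\theta\rangle|^{\alpha}\,d\mu(\theta)$ with $c_\alpha>0$, obtained by performing the radial integration, or directly by rescaling the radial variable in the polar form of $\nu$).

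For the scaling property, I would first note that $t\mapsto\lambda^{-1/\alpha}L_{\lambda t}$ inherits the defining properties of Definition \ref{appendix: def levy process}, hence is again a L\'evy process; since a L\'evy process is determined in law by its characteristic exponent, it suffices to compute $\EE\big[e^{i\langle\xi,\lambda^{-1/\alpha}L_{\lambda t}\rangle}\big]=e^{-\lambda t\,\psi(\lambda^{-1/\alpha}\xi)}=e^{-t\psi(\xi)}$, using the homogeneity. This is the exponent of $L$ itself, so the two processes have the same finite-dimensional laws. Taking $\lambda=t$ gives $L_t\overset{d}{=}t^{1/\alpha}L_1$, and therefore $\mu_t(A)=\PP(L_1\in t^{-1/\alpha}A)=\mu_1(t^{-1/\alpha}A)$ for every $A\in\mathcal B(\RR^d)$.

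For the smooth density the key input is the standing lower bound $\psi(\xi)\geq C_\alpha|\xi|^{\alpha}$, which forces $\xi\mapsto|\xi|^{k}e^{-t\psi(\xi)}\leq|\xi|^{k}e^{-tC_\alpha|\xi|^{\alpha}}$ to lie in $\LL^1(\RR^d)$ for every $k\in\NN$. Fourier inversion then yields the density $\rho_t$ with the announced formula, and differentiating under the integral sign (justified by the domination just noted) gives $\rho_t\in C^{\infty}$ with $\nabla^{k}\rho_t=\mathcal F^{-1}\big[(i\xi)^{k}e^{-t\psi}\big]$ bounded; evenness of $\rho_t$ is immediate from evenness of $\psi$. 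For $\nabla^{k}\rho_t\in\LL^1(\RR^d)$ I would invoke the scaling $\rho_t(x)=t^{-d/\alpha}\rho_1(t^{-1/\alpha}x)$ to reduce to $t=1$: then $\nabla^{k}\rho_1$ is bounded near the origin, and its integrability at infinity follows from the classical decay estimates for the (in general anisotropic) stable density and its derivatives, which I would quote from \cite{Priola,Sato}. This last point is the only genuinely quantitative ingredient and the place I expect to be the main obstacle --- the soft Fourier argument above does not give it, since $\psi$ need not even be $C^{2}$ (as already happens when the spectral measure $\mu$ is supported on finitely many points), so a real estimate on the tails of $\rho_t$ is needed.

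Finally, for the moments I would reduce to $t=1$ once more via $\EE[|L_t|^{\beta}]=t^{\beta/\alpha}\EE[|L_1|^{\beta}]$, and use $\EE[|L_1|^{\beta}]=\beta\int_0^{\infty}R^{\beta-1}\PP(|L_1|>R)\,dR$. It then suffices to have the two-sided tail bound $\PP(|L_1|>R)\asymp R^{-\alpha}$ as $R\to\infty$: the upper bound comes from integrating the density decay used in the previous step (or from a direct truncation of $\nu$ into small- and large-jump parts), while the lower bound follows from the single-big-jump estimate $\PP(|L_1|>R)\gtrsim\nu(\{|z|>2R\})\asymp R^{-\alpha}$. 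Since $R^{\beta-1}\PP(|L_1|>R)\asymp R^{\beta-1-\alpha}$ near infinity and the integrand is bounded near $0$, the integral is finite precisely when $\beta<\alpha$, which yields both the finiteness for $\beta<\alpha$ and the divergence for $\beta\geq\alpha$.
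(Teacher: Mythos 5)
The paper does not actually prove this proposition --- it is recalled as a classical fact with pointers to \cite[Proposition 2.5]{Sato} and \cite[Section 3]{Priola} --- and your sketch is a correct reconstruction of the standard arguments behind those references: the homogeneity $\psi(c\xi)=c^{\alpha}\psi(\xi)$ (valid here because the symmetry of $\nu$ kills the compensator term and leaves $\psi(\xi)=\int(1-\cos\langle\xi,z\rangle)\,d\nu(z)=c_\alpha\int_{S^{d-1}}|\langle\xi,\theta\rangle|^{\alpha}d\mu(\theta)$) gives the scaling, the standing lower bound $\psi(\xi)\ge C_\alpha|\xi|^{\alpha}$ gives Fourier inversion and smoothness, and the two-sided tail estimate $\PP(|L_1|>R)\asymp R^{-\alpha}$ gives exactly the moment dichotomy at $\beta=\alpha$. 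You also correctly isolate the one genuinely quantitative step --- the $\LL^1$-integrability of $\nabla^k\rho_t$, i.e.\ the decay of the derivatives of the (possibly anisotropic) stable density, which does not follow from the soft Fourier argument since $\psi$ need not be $C^2$ --- and defer it to \cite{Priola,Sato}, which is precisely where the paper leaves it as well.
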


\subsection{Semi-group and Sobolev spaces\label{subsect Bessel}}
From now on, we denote $\mathbb{L}^p=\mathbb{L}^p(\RR^d)$ and $\HH^\epsilon = \HH^\epsilon(\RR^d)$.  The family of operators,
for $t\geq0$,%
\[
\left(  e^{t\mathcal{L}}f\right)  \left(  x\right)  =\int_{\mathbb{R}^{d}} p_t(y-x)  f(y) \;   dy
\]
defines a Markov semi-group in each space $ \HH^{\epsilon}$ ; with little abuse of notation, we write $e^{t\mathcal{L}}$
for each value of $\epsilon$.

We consider  the operator $A:D(  A)  \subset
\LL^{2}\rightarrow \LL^{2}  $ defined as $Af=\Delta f$. For $\epsilon\geq 0 $,  the  fractional powers $(
I-A)^{\epsilon}$ are well defined for every $\epsilon\in\mathbb{R}$ and
$\Vert (  I-A)^{\epsilon/2}f\Vert _{\LL^2}$ is equivalent to the norm of $\HH^{\epsilon}  $. We recall also 

\begin{proposition} \label{prop} For every
$\epsilon\geq 0$ and $\alpha \in (1,2)$, and given $T>0$, there is a constant $C_{\epsilon,\alpha,T}$ such that, 
for any $t\in(0,T]$, 
\begin{equation}\label{s1}
\left\Vert \left(  I-A\right)^{\epsilon}e^{t\mathcal{L}}\right\Vert _{\LL^{2}\rightarrow
\LL^{2}}\leq\frac{C_{\epsilon,\alpha, T}}{t^{2 \epsilon/\alpha}}.%
\end{equation}
\end{proposition}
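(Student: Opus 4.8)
The plan is to diagonalize both operators with the Fourier transform, which turns the operator bound into an elementary one‑variable supremum estimate. Since $A=\Delta$, the operator $(I-A)^\epsilon$ is the Fourier multiplier $\lambda\mapsto(1+|\lambda|^2)^\epsilon$. On the other hand, by the definition $(e^{t\mathcal L}f)(x)=\int_{\RR^d}p_t(y-x)f(y)\,dy$ together with the symmetry of $p_t$ and the identity $\EE[e^{i\langle\xi,L_t\rangle}]=e^{-t\psi(\xi)}$, the semi‑group $e^{t\mathcal L}$ is the convolution by $p_t$, hence the Fourier multiplier $\xi\mapsto e^{-t\psi(\xi)}$, where $\psi$ is the real nonnegative exponent from \eqref{eq:psi}. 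By Plancherel's theorem it then suffices to estimate the supremum of the product of the two symbols:
\[
\big\|(I-A)^\epsilon e^{t\mathcal L}\big\|_{\LL^2\to\LL^2}=\sup_{\xi\in\RR^d}(1+|\xi|^2)^\epsilon\,e^{-t\psi(\xi)}.
\]

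Next I would invoke the spectral lower bound $\psi(\xi)\ge C_\alpha|\xi|^\alpha$ assumed in Section~\ref{ssec:levy}, which reduces the problem to bounding $\sup_{\xi}(1+|\xi|^2)^\epsilon e^{-tC_\alpha|\xi|^\alpha}$. Using the elementary inequality $(1+|\xi|^2)^\epsilon\le 2^\epsilon(1+|\xi|^{2\epsilon})$ valid for $\epsilon\ge0$, this splits into the trivial term $\sup_\xi e^{-tC_\alpha|\xi|^\alpha}\le 1$ and the term $\sup_\xi|\xi|^{2\epsilon}e^{-tC_\alpha|\xi|^\alpha}$. For the latter, the scaling substitution $v=t^{1/\alpha}|\xi|$ gives
\[
\sup_{\xi\in\RR^d}|\xi|^{2\epsilon}e^{-tC_\alpha|\xi|^\alpha}=t^{-2\epsilon/\alpha}\sup_{v\ge0}v^{2\epsilon}e^{-C_\alpha v^\alpha}=:M_{\epsilon,\alpha}\,t^{-2\epsilon/\alpha},
\]
where $M_{\epsilon,\alpha}<\infty$ because $v\mapsto v^{2\epsilon}e^{-C_\alpha v^\alpha}$ is continuous on $[0,\infty)$ and tends to $0$ as $v\to\infty$.

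Finally, for $t\in(0,T]$ one has $1\le T^{2\epsilon/\alpha}t^{-2\epsilon/\alpha}$ since the exponent is nonnegative, so combining the two contributions yields
\[
\big\|(I-A)^\epsilon e^{t\mathcal L}\big\|_{\LL^2\to\LL^2}\le 2^\epsilon\big(1+M_{\epsilon,\alpha}\,t^{-2\epsilon/\alpha}\big)\le 2^\epsilon\big(T^{2\epsilon/\alpha}+M_{\epsilon,\alpha}\big)\,t^{-2\epsilon/\alpha},
\]
which is the claimed estimate with $C_{\epsilon,\alpha,T}=2^\epsilon\big(T^{2\epsilon/\alpha}+M_{\epsilon,\alpha}\big)$.

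There is no genuine obstacle in this argument; it is a routine multiplier computation. The one place that truly matters is the use of the lower bound $\psi(\xi)\ge C_\alpha|\xi|^\alpha$ — equivalently, that the spectral measure $\mu$ is not concentrated on a proper linear subspace of $\RR^d$. Without it the symbol $e^{-t\psi(\xi)}$ need not decay along certain directions, the supremum above would be infinite, and the smoothing estimate would fail; so the only care needed is to make sure this hypothesis is explicitly in force (as it is, by assumption, throughout the paper).
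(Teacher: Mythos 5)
Your proof is correct, but it takes a genuinely different route from the paper's. You diagonalize both $(I-A)^{\epsilon}$ and $e^{t\mathcal{L}}$ on the Fourier side and reduce the operator bound, via Plancherel, to the elementary supremum $\sup_{\xi}(1+|\xi|^{2})^{\epsilon}e^{-t\psi(\xi)}$, which you then control using the nondegeneracy hypothesis $\psi(\xi)\geq C_{\alpha}|\xi|^{\alpha}$ and the scaling substitution $v=t^{1/\alpha}|\xi|$. The paper instead works in physical space: it uses the self-similarity of the transition density, $\rho_{t}(x)=t^{-d/\alpha}\rho_{1}(t^{-1/\alpha}x)$, together with Young's inequality to obtain a bound of the form $Ct^{-k/\alpha}$ for the gain of $k$ integer derivatives, combines this with the $\LL^{2}$-contractivity of the sub-Markovian semigroup, and then interpolates to reach general $\epsilon\geq 0$. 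Your argument is shorter, treats all $\epsilon\geq0$ in one stroke without interpolation, and makes completely explicit where the nondegeneracy of the spectral measure enters (as you correctly note, the symbol would not decay in degenerate directions and the estimate would fail); in the paper's version that same hypothesis is hidden in the existence and $\LL^{1}$-integrability of $\nabla^{k}\rho_{1}$. The paper's real-space route, on the other hand, is the one that survives when the semigroup is not a Fourier multiplier. Both arguments yield the stated bound.
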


\begin{proof}We only sketch the proof, which is standard.

\medskip

\noindent \textsc{Step 1}: Using the scaling property $\rho_t(x)=t^{-\frac{d}{\alpha}}  \rho_{1}(t^{-\frac{1}{\alpha}}x)$, we arrive at
\[
\big|\nabla^{k+m} \rho_t \ast f\big|(x)\leq \frac{t^{-\frac{k}{\alpha}}}{t^{\frac{d}{\alpha}}} 
\int_{\RR^{d}} \ \big|\nabla^{m}f(z)\big|\  \big|(\nabla^{k}\rho_1)(t^{-\frac{1}{\alpha}}z-t^{-\frac{1}{\alpha}}x)\big| \ dz.
\]
Thus
\[
\big\|\nabla^{k+m} \rho_t \ast f \big\|_{\LL^2}   \leq t^{-\frac{k}{\alpha}} \big\|\nabla^{m}f \big\|_{\LL^2}  \   \big\|\nabla^{k} \rho_1 \big\|_{\LL^1}
\]
It follows that  
\[
\big\|\nabla^{k} e^{t\mathcal{L}} \big\|_{\HH^{m+k}  \rightarrow \HH^{m}}   \leq  C t^{-\frac{k}{\alpha}}.
 \]

\medskip

\noindent \textsc{Step 2}: The sub-Markovian   of $e^{t\mathcal{L}}$ (see \cite{App})  implies that
\[
\|e^{t\mathcal{L}} f\|_{\LL^2}\leq \| f\|_{\LL^2}. 
\]

\medskip

\noindent \textsc{Step 3}: Finally,  by a standard interpolation inequality  we conclude the proposition for any $\epsilon$.
\end{proof}

\subsection{Positive operator}

\begin{lemma}\label{positi} We assume that $f\in \mathbb{L}^{1} \cap \mathbb{L}^{2}$ and $f\geq 0$. Then for any $t\geq 0$,  $(  I-A)  ^{\epsilon/2}\; e^{t\mathcal{L}}f\geq0$.\end{lemma}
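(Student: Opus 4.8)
The plan is to realise $(I-A)^{\epsilon/2}e^{t\mathcal L}$ as convolution against a nonnegative kernel, which together with $f\ge 0$ gives the conclusion. For $t>0$, the bound $\psi(\xi)\ge C_\alpha|\xi|^\alpha$ makes $\xi\mapsto(1+|\xi|^2)^{\epsilon/2}e^{-t\psi(\xi)}$ a rapidly decreasing function, so its inverse Fourier transform
\[
\Phi_t:=\mathcal F^{-1}\big[(1+|\xi|^2)^{\epsilon/2}\,e^{-t\psi(\xi)}\big]
\]
lies in $\LL^1(\RR^d)\cap\LL^2(\RR^d)$ and one has $(I-A)^{\epsilon/2}e^{t\mathcal L}f=\Phi_t\ast f$ for every $f\in\LL^1\cap\LL^2$. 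Hence it suffices to prove $\Phi_t\ge 0$ for each $t>0$; the limiting case $t=0$ follows on letting $t\downarrow 0$, using $e^{t\mathcal L}f\to f$.

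Because the symbol factorises, $(I-A)^{\epsilon/2}e^{t\mathcal L}=\big((I-A)^{\epsilon/(2n)}e^{(t/n)\mathcal L}\big)^{n}$ for every $n\in\NN$; since the composition of convolution operators with $\LL^1$ kernels is convolution with the ($n$-fold) convolution of those kernels, which is nonnegative if all factors are, it suffices to treat $0<\epsilon<2$ (pick $n$ so that $\epsilon/n<2$). For such $\epsilon$ I would use the Balakrishnan identity $\lambda^{\epsilon/2}=c_\epsilon\int_0^\infty(1-e^{-r\lambda})\,r^{-1-\epsilon/2}\,dr$ (with $c_\epsilon>0$, $\lambda\ge 0$) applied to $\lambda=I-A=I-\Delta$, which gives
\[
(I-A)^{\epsilon/2}e^{t\mathcal L}f=c_\epsilon\int_0^\infty\Big(e^{t\mathcal L}f-e^{-r}e^{r\Delta}e^{t\mathcal L}f\Big)\,r^{-1-\epsilon/2}\,dr .
\]
Setting $g:=e^{t\mathcal L}f=p_t\ast f\ge 0$ and using that $e^{-r}e^{r\Delta}$ is positivity-preserving, one has $g-e^{-r}e^{r\Delta}g=\int_0^r e^{-\rho}e^{\rho\Delta}\big[(I-\Delta)g\big]\,d\rho$, so the right-hand side above is $\ge 0$ as soon as $(I-\Delta)g\ge 0$; since $(I-\Delta)g=\big[(I-\Delta)p_t\big]\ast f$ and $f\ge 0$, it is enough to establish
\[
(I-\Delta)p_t\ge 0\qquad\text{for every }t>0 .
\]

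This last point is the delicate one, and in my view the main obstacle: it is equivalent to positive-definiteness of the symbol $(1+|\xi|^2)e^{-t\psi(\xi)}$, and its proof must exploit the fine structure of the symmetric $\alpha$-stable kernel. Using the scaling relation $p_t(x)=t^{-d/\alpha}p_1(t^{-1/\alpha}x)$ and the integrability $\nabla^k p_1\in\LL^1$ recalled in Section~\ref{ssec:levy}, one compares $\Delta p_t$ with $p_t$: the former is strictly negative near the origin, where $p_t$ is maximal, while the heavy polynomial tails $p_t(x)\approx c\,t\,|x|^{-(d+\alpha)}$ force $\Delta p_t$ to be negligible relative to $p_t$ as $|x|\to\infty$; a P\'olya-type convexity/monotonicity analysis of the radial profile of $p_t$ should then pin down the sign and yield $(I-\Delta)p_t\ge 0$. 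Granting this, convolving with $f\ge 0$ and integrating back in the formulas above gives $\Phi_t\ast f=(I-A)^{\epsilon/2}e^{t\mathcal L}f\ge 0$, as claimed; all the remaining ingredients — the multiplier identities, sub-Markovianity of $e^{r\Delta}$ and of $e^{t\mathcal L}$, and the semigroup estimates — are already recorded in Section~\ref{sec:preli}.
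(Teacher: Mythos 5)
Your strategy is genuinely different from the paper's: the paper argues on the Fourier side, invoking Bochner's theorem to show that $\widehat g(\lambda)=(1+|\lambda|^2)^{\epsilon/2}e^{-t\psi(\lambda)}\widehat f(\lambda)$ is positive definite, whereas you try to exhibit $(I-A)^{\epsilon/2}e^{t\mathcal L}$ as convolution with a nonnegative kernel by combining the Balakrishnan representation of $(I-\Delta)^{\epsilon/2}$ with positivity of the heat and stable semigroups. The preparatory reductions are fine: the factorisation of the symbol to reduce to $0<\epsilon<2$, the Balakrishnan formula, and the identity $g-e^{-r}e^{r\Delta}g=\int_0^r e^{-\rho}e^{\rho\Delta}\big[(I-\Delta)g\big]\,d\rho$ are all correct.

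The problem is the step you yourself flag as the main obstacle: it is not merely delicate, it is false. The sufficient condition you reduce everything to, namely $(I-\Delta)p_t\ge 0$, fails for all sufficiently small $t>0$. By the scaling $p_t(x)=t^{-d/\alpha}p_1(t^{-1/\alpha}x)$ one has, writing $y=t^{-1/\alpha}x$,
\[
(I-\Delta)p_t(x)=t^{-d/\alpha}\Big(p_1(y)-t^{-2/\alpha}\,(\Delta p_1)(y)\Big).
\]
Now $\int_{\RR^d}\Delta p_1\,dx=0$ (its Fourier transform $-|\xi|^2\widehat{p_1}(\xi)$ vanishes at $\xi=0$), while $\Delta p_1(0)=-(2\pi)^{-d}\int|\xi|^2e^{-\psi(\xi)}\,d\xi<0$; hence there exists $y_0$ with $\Delta p_1(y_0)>0$, and the displayed quantity at $x_0=t^{1/\alpha}y_0$ is negative as soon as $t^{2/\alpha}<\Delta p_1(y_0)/p_1(y_0)$. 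Your heuristic looks at the origin and at the far tail, but the failure occurs in the intermediate region $|x|\sim t^{1/\alpha}$, where the Laplacian picks up an extra factor $t^{-2/\alpha}$ relative to $p_t$ and is positive. This is fatal for the intended application, since the lemma is used in the paper precisely with $t=h\downarrow 0$ and with $t-s$ small in the Gronwall step. The convolution-kernel framework ($\Phi_t\ast f$, so that it suffices to show $\Phi_t\ge 0$) is a legitimate reformulation of the lemma, but the positivity of $\Phi_t$ must be attacked by a different mechanism than domination of $\Delta p_t$ by $p_t$; as it stands, the proof has a genuine gap that the proposed route cannot close.
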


\begin{proof} We observe that 
$g:=\left(  I-A\right)^{\epsilon/2}e^{t\mathcal{L}}f\in \mathbb{L}^1$. In fact, 
\[
\| g\|_{\mathbb{L}^{1}}= \big\| \left(  I-A\right)^{\epsilon/2}e^{t\mathcal{L}} f    \big\|_{\mathbb{L}^{1}} 
\leq \| f\|_{\mathbb{L}^{1}}  \ \    \|\left(  I-A\right)^{\epsilon/2}e^{t\mathcal{L}}    \big\|_{\mathbb{L}^{1}} \ < \ \infty.
\]
    In order to prove that the function $g:=\left(  I-A\right)
^{\epsilon/2}e^{t\mathcal{L}}f$ is non-negative,  by  Bochner's  Theorem  it is sufficient to prove that its
Fourier transform $\widehat{g}$ is definite positive, namely
$\operatorname{Re}\big[\sum_{i,j=1}^{n}\widehat{g}\left(  \lambda_{i}-\lambda
_{j}\right)  \xi_{i}\overline{\xi}_{j}\big]\geq0$ for every $n\in\mathbb{N}$,
$\lambda_{i}\in\mathbb{R}^{d}$ and $\xi_{i}\in\mathbb{C}$, $i=1,...,n$. We
have
\[
\widehat{g}\left(  \lambda\right)  =\big(  1+\left\vert \lambda\right\vert
^{2}\big)  ^{\epsilon/2}e^{-t \psi(\lambda)}
\widehat{f}\left(  \lambda\right)
\]
where $\psi$ has been defined in \eqref{eq:psi}. Thus we have to prove that, given $n\in\mathbb{N}$, $\lambda_{i}%
\in\mathbb{R}^{d}$ and $\xi_{i}\in\mathbb{C}$, $i=1,...,n$, one has
\[
\operatorname{Re}\bigg[\sum_{i,j=1}^{n}\big(  1+\left\vert \lambda_{i}-\lambda
_{j}\right\vert ^{2}\big)  ^{\epsilon/2}e^{-t   \psi(\lambda_{i}
-\lambda_{j})}    \widehat{f}\left(  \lambda_{i}-\lambda
_{j}\right)  \xi_{i}\overline{\xi}_{j}\bigg]\geq0
\]
namely
\begin{multline*}
\sum_{i=1}^{n}\operatorname{Re} \big[  e^{-t \psi(0)}\widehat{f}\left(  0\right)  \xi_{i}%
\overline{\xi}_{i}\big]\\+\sum_{i<j}\big(  1+\left\vert \lambda_{i}-\lambda
_{j}\right\vert ^{2}\big)  ^{\epsilon/2}  e^{-t\psi(\lambda_{i}
-\lambda_{j})}      \left(  \operatorname{Re}\big[\widehat{f}\left(
\lambda_{i}-\lambda_{j}\right)  \xi_{i}\overline{\xi}_{j}\big]+\operatorname{Re}\big[
\widehat{f}\left(  \lambda_{j}-\lambda_{i}\right)  \xi_{j}\overline{\xi}%
_{i}\big]\right)  \geq 0.
\end{multline*}
We observe 
\[
\sum_{i=1}^{n} \operatorname{Re} \big[  e^{-t \psi(0)} \widehat{f}\left(  0\right)  \xi_{i}
\overline{\xi}_{i}\big] = 
|\xi|^{2} e^{-t \psi(0)} \operatorname{Re} \big[\widehat{f}(0)\big]=
|\xi|^{2} e^{-t \psi(0)} \int f(x) \ dx \geq 0. 
\]
 Since  $f$ is non-negative,   for $i\neq j $ we obtain 
\[
\operatorname{Re}\big[\widehat{f}\left(  \lambda_{i}-\lambda_{j}\right)  \xi
_{i}\overline{\xi}_{j}\big]+\operatorname{Re}\big[\widehat{f}\left(  \lambda_{j}%
-\lambda_{i}\right)  \xi_{j}\overline{\xi}_{i}\big]\geq0.
\]
Using these two facts above we get the result.
\end{proof}

\subsection{Maximal function}
\label{sec:max} 
Let $f$ be a locally integrable function on $\RR^{d}$.  The \emph{Hardy-Littlewood maximal function} is
defined by
\[
\mathbb{M}f(x)=\sup_{0< r< \infty} \bigg\{\frac{1}{|\mathcal{B}_r|} \int_{\mathcal{B}_r} f(x+y) \ dy\bigg\},
\]
where $\mathcal{B}_r=\left\{ x\in\RR^{d} \ : |x|< r   \right\}$. The following results can be found in \cite{Stein}.

\begin{lemma} \label{maxi} For all $f\in \mathbb{W}^{1,1}(\RR^d)$ there exists a constant $C_d>0$ and a Lebesgue zero set $E\subset \RR^d$ such that
\[
|f(x)-f(y)|\leq C_d \; |x-y|\; \Big(\mathbb{M}|\nabla f|(x)   +  \mathbb{M}|\nabla f|(y)\Big) \qquad \text{for any } x,y \in \RR^d\backslash E. 
\]
Moreover, for all $p>1$   there exists a constant  $C_{d,p}>0$ such that for all  $f\in \LL^{p}(\RR^{d})$
\[
 \|\mathbb{M} f\|_{\LL^{p}} \leq C_{d,p}  \;   \| f\|_{\LL^{p}}.
\]
\end{lemma}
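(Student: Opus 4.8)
The statement to be proved is Lemma~\ref{maxi}, which bundles two classical facts about the Hardy--Littlewood maximal function: a pointwise Morrey-type estimate for $\mathbb W^{1,1}$ functions and the strong $(p,p)$ maximal inequality for $p>1$. The plan is to prove each part separately, invoking only elementary covering arguments and the weak $(1,1)$ bound, exactly as in \cite{Stein}.

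\textbf{Part 1 (pointwise bound via gradients).} First I would fix $x,y$ Lebesgue points of $f$ (their complement $E$ being Lebesgue-null) and set $r=|x-y|$. The idea is to compare $f(x)$ and $f(y)$ through the averages $f_{\bB(x,r)}:=|\bB_r|^{-1}\int_{\bB(x,r)}f$ and $f_{\bB(y,r)}$. For the term $|f(x)-f_{\bB(x,r)}|$, I would use the standard telescoping over dyadic balls $\bB(x,2^{-k}r)$, $k\ge 0$: on each annulus the difference of consecutive averages is controlled, via the Poincar\'e inequality on a ball, by $2^{-k}r$ times the average of $|\nabla f|$ over $\bB(x,2^{-k}r)$, which is at most $2^{-k}r\,\MM|\nabla f|(x)$; summing the geometric series gives $|f(x)-f_{\bB(x,r)}|\le C_d\, r\,\MM|\nabla f|(x)$, and similarly at $y$. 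Finally, since $\bB(x,r)\subset \bB(y,2r)$ and $\bB(y,r)\subset\bB(x,2r)$, the two averages $f_{\bB(x,r)}$ and $f_{\bB(y,r)}$ both lie within $C_d\,r\big(\MM|\nabla f|(x)+\MM|\nabla f|(y)\big)$ of, say, $f_{\bB(x,2r)}$ by the same telescoping trick, so they differ by at most that much. Combining the three estimates via the triangle inequality yields the claimed pointwise inequality. The one technical point to be careful about is the Poincar\'e inequality with $\LL^1$ right-hand side: on a ball $\bB$, $\fint_{\bB}|f-f_{\bB}|\le C_d\,\mathrm{rad}(\bB)\fint_{\bB}|\nabla f|$, which holds for $W^{1,1}$ functions (e.g. by the representation through the Riesz potential of $|\nabla f|$), and this is exactly where the hypothesis $f\in\mathbb W^{1,1}$ is used.

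\textbf{Part 2 (strong $(p,p)$ inequality).} For the second assertion I would follow the classical two-step route. First establish the weak-type $(1,1)$ bound $|\{\MM f>\la\}|\le C_d\la^{-1}\|f\|_{\LL^1}$ using the Vitali covering lemma: every point of the superlevel set is the center of a ball on which the average of $|f|$ exceeds $\la$; extract a countable disjoint subfamily whose $5$-dilates cover the set, and sum. Since trivially $\|\MM f\|_{\LL^\infty}\le\|f\|_{\LL^\infty}$, the Marcinkiewicz interpolation theorem between the weak $(1,1)$ and the strong $(\infty,\infty)$ endpoints gives $\|\MM f\|_{\LL^p}\le C_{d,p}\|f\|_{\LL^p}$ for every $p>1$, with the constant blowing up as $p\downarrow 1$, as expected.

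\textbf{Main obstacle.} None of this is deep --- it is textbook material, which is why the paper simply cites \cite{Stein}. If I had to flag the step that is least mechanical, it is the telescoping argument in Part~1: one must organize the dyadic sum carefully so that the gradient averages are genuinely dominated by $\MM|\nabla f|$ at the \emph{correct} base point, and handle the comparison of the two off-center averages $f_{\bB(x,r)}$ and $f_{\bB(y,r)}$ cleanly. Everything else (Vitali covering, Marcinkiewicz interpolation) is entirely standard. Accordingly, in the write-up I would likely just reference \cite{Stein} for both parts rather than reproduce the arguments.
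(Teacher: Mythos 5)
Your outline is correct: the telescoping/Poincaré argument for the pointwise Morrey-type bound and the Vitali-covering plus Marcinkiewicz-interpolation route for the strong $(p,p)$ inequality are exactly the standard proofs. The paper itself gives no proof and simply cites \cite{Stein}, which is also what you propose to do, so there is nothing to compare beyond noting that your sketch faithfully reproduces the classical arguments.
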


\subsection{Criterion of convergence in probability}

\begin{lemma}[Gyongy-Krylov \cite{Gyon}] \label{GK}

Let $\{X_{n}\}_{n\in\NN}$ be a sequence of random elements in a Polish
space $\Psi$ equipped with the Borel $\sigma$-algebra. Then $X_{n}$ converges in
probability to a $\Psi$-valued random element if, and only if, for each pair
$\{X_{\ell},X_{m}\}$ of subsequences, there exists a subsequence $\{v_{k}\}$,
\[
v_{k}= (X_{\ell(k)}, X_{m(k)}),
\]
converging weakly to a random element $v$ supported on the diagonal set
\[
\big\{ (x,y) \in\Psi\times\Psi: x= y \big\}.
\]

\end{lemma}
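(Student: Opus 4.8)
The plan is to establish the two implications separately, exploiting that $\Psi$, being Polish, admits a complete metric $\rho$ inducing its topology. I would work throughout with the associated distance $d_{\PP}(X,Y):=\EE\big[\rho(X,Y)\wedge 1\big]$ on the set of (equivalence classes of) $\Psi$-valued random elements; it is a classical fact that $d_{\PP}$ metrizes convergence in probability and that the resulting metric space is complete, and both statements will be used below.

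The direct implication is immediate. If $X_n\to X$ in probability, then for any pair of subsequences indexed by $\ell(k)$ and $m(k)$, both $X_{\ell(k)}$ and $X_{m(k)}$ converge to the same limit $X$ in probability; the triangle inequality for the product metric then gives $v_k:=(X_{\ell(k)},X_{m(k)})\to (X,X)$ in probability on $\Psi\times\Psi$. Since convergence in probability entails convergence in law, $v_k$ converges weakly to $v:=(X,X)$, whose law is clearly carried by the diagonal $\{x=y\}$. Here no further extraction is needed.

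For the converse I would first reduce the desired conclusion to the Cauchy property. By completeness of $d_{\PP}$ it suffices to show that $\{X_n\}$ is Cauchy in probability, i.e.\ $d_{\PP}(X_\ell,X_m)\to 0$ as $\ell,m\to\infty$; the limiting random element is then produced automatically (extract an almost surely convergent subsequence via a Borel--Cantelli argument and invoke completeness of $\rho$). Arguing by contradiction, suppose $\{X_n\}$ is \emph{not} Cauchy in probability: then there are $\eta>0$ and subsequences $\ell(k),m(k)\to\infty$ with $\EE\big[\rho(X_{\ell(k)},X_{m(k)})\wedge 1\big]\geq\eta$ for all $k$. Applying the hypothesis to this very pair, I pass to a further subsequence, not relabelled, along which $v_k=(X_{\ell(k)},X_{m(k)})$ converges weakly to some $v$ supported on the diagonal.

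The heart of the matter is then to convert the diagonal support into a quantitative contradiction. The map $\Phi(x,y):=\rho(x,y)\wedge 1$ is bounded and continuous on $\Psi\times\Psi$, so weak convergence yields $\EE[\Phi(v_k)]\to\int_{\Psi\times\Psi}\Phi\,dv$; since $v$ charges only the diagonal, where $\Phi\equiv 0$, the right-hand side vanishes, contradicting $\EE[\Phi(v_k)]\geq\eta>0$. Hence $\{X_n\}$ is Cauchy in probability and therefore convergent in probability. I expect the only genuinely nontrivial ingredients to be the choice of the bounded continuous test function $\rho\wedge 1$ together with the two background facts that convergence in probability is metrized by a complete distance on a Polish target; once these are granted, the portmanteau characterization of weak convergence closes the argument.
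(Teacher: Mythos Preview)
Your argument is correct and follows the standard proof of the Gyongy--Krylov criterion: the forward direction is immediate, and for the converse you reduce to the Cauchy property for the Ky Fan--type metric $d_{\PP}(X,Y)=\EE[\rho(X,Y)\wedge 1]$, then derive a contradiction by testing weak convergence against the bounded continuous functional $(x,y)\mapsto \rho(x,y)\wedge 1$, which vanishes on the diagonal. The only background facts invoked (that $d_{\PP}$ metrizes convergence in probability and is complete when $\Psi$ is Polish) are classical, and your use of them is sound.

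Note, however, that the paper does not supply its own proof of this lemma: it is simply quoted from \cite{Gyon} as a preliminary tool, so there is no ``paper's proof'' to compare against. Your write-up is essentially the argument one finds in Gyongy--Krylov, so nothing is lost and nothing is gained methodologically; you have just made explicit what the paper leaves as a citation.
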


\section{ Proof of Theorem \ref{Thm 1}} \label{sec:result}

The strategy is as follows:
\begin{enumerate}
\item Recall that we already defined
\begin{equation}
g_{t}^{N}(x)=(V^{N}\ast S_{t}^{N})(x)=\int_{\mathbb{R}^{d}}V^{N}(x-y)  dS_{t}^{N}\left(  y\right).  \label{def g}%
\end{equation}
Using a mild (semi-group) formulation of the identity satisfied by
$g_{t}^{N}$ (Section \ref{sec:eq}), we  prove  uniform bounds in Section \ref{sec:bound} (Lemma \ref{estimation}, Lemma \ref{lemma martingale 1} and Lemma \ref{estimation2}).

\item Then we apply compactness arguments and Sobolev embeddings to have
subsequences which converge so as to pass to the limit (Sections \ref{subsect compactness} and \ref{sec:limit}). 

\item Finally, we use previous works to obtain that the weak solution to \eqref{PDEintr} is unique in our class of convergence (Section \ref{sec:unique}). 
\end{enumerate}

\subsection{The equation for $g_{t}^{N}$ in
mild form} \label{sec:eq}

 We want to
deduce an identity for $g_{t}^{N}(x)$ from \eqref{ident fro S^N}. 
 For $h>0$, let us consider the regularized
function $e^{h\mathcal{L}} V^{N} $. For every given
$x\in\mathbb{R}^{d}$ let us take, in identity \eqref{ident fro S^N}, the test
function $\phi_{x}\left(  y\right)  =\left(  e^{h\mathcal{L}}
V^{N} \right)  \left(  x-y\right)  $. We get 
\begin{align*}
\big(  e^{h\mathcal{L}}&g_{t}^{N}\big)  (x)    =\big(  e^{h\mathcal{L}}g_{0}^{N}\big)  (x)+\int_{0}^{t}\left\langle S_{s}%
^{N},F(\cdot \; ,\; g_{s}^{N})\nabla\big(  e^{h\mathcal{L}} V^{N}
\big)  \left(  x-\cdot\right)  \right\rangle ds\\
&  +\frac{1}{2}\int_{0}^{t}\mathcal{L}\big(  e^{h\mathcal{L}}g_{s}%
^{N}\big)  (x)ds\\
& + \frac{1}{N}\sum_{i=1}^{N} \int_{0}^{t} \int_{\RR^{d}-\{0\}} \Big\{  \big(
e^{h\mathcal{L}} V^{N} \big)  \big(  x-X_{s_{-}}^{i,N}+ z  \big)  -   
   \big(e^{h\mathcal{L}} V^{N}\big)\big( x-X_{s_{-}}^{i,N}\big)  \Big\} d\tilde{\mathcal N}^{i}(dsdz).
\end{align*}
Let us write, in the sequel,%
\[
\left\langle S_{s}^{N},F(\cdot\; ,\; g_{s}^{N})\nabla\big(  e^{h\mathcal{L}}
V^{N} \big)  \left(  x-\cdot\right)  \right\rangle =:\left(
\nabla\big(  e^{h\mathcal{L}} V^{N} \big)  \ast\left(
F(\cdot\; , \; g_{s}^{N})S_{s}^{N}\right)  \right)  \left(  x\right).
\]
and similarly for similar expressions. Following a standard procedure, used for instance by \cite{DaPrZab}, we may
rewrite this equation in mild form:
\begin{align*}
&e^{h\mathcal{L}}g_{t}^{N}    =e^{t\mathcal{L}}\big(  e^{h\mathcal{L}}%
g_{0}^{N}\big)  +\int_{0}^{t}e^{\left(  t-s\right) \mathcal{L}%
}\left(  \nabla e^{h\mathcal{L}} V^{N} \ast\left(
F(\cdot\; ,\; g_{s}^{N})S_{s}^{N}\right)  \right)  ds\\
&  +    \frac{1}{N}\sum_{i=1}^{N} \int_{0}^{t}  e^{\left(  t-s\right)\mathcal{L}} \int_{\RR^{d}-\{0\}}  \Big\{ \big(
e^{h\mathcal{L}} V^{N} \big)  \big(  x-X_{s_{-}}^{i,N}+ z  \big) -   
   \big(e^{h\mathcal{L}} V^{N} \big)\big( x-X_{s_{-}}^{i,N}\big) \Big\} d\tilde{\mathcal N}^{i}(dsdz).
\end{align*}
By inspection of the convolution explicit formula for $e^{\left(  t-s\right)
\mathcal{L}}$, we see that $e^{\left(  t-s\right)  \mathcal{L}}\nabla f=\nabla e^{\left(
t-s\right) \mathcal{L} }f$, and we can also use the semi-group property, hence we may
also write
\begin{align}
e^{h\mathcal{L}}g_{t}^{N}&=e^{(t+h)\mathcal{L}}g_{0}^{N}+\int%
_{0}^{t}\nabla e^{\left(  t+h-s\right) \mathcal{L}}\left( 
V^{N} \ast\left(  F(\cdot \; ,\; g_{s}^{N})S_{s}^{N}\right)  \right)  ds \notag \\ \label{mildh}
 & +\frac{1}{N}\sum_{i=1}^{N} \int_{0}^{t} \int_{\RR^{d}-\{0\}}   \bigg\{\big(
e^{(t-s+ h)\mathcal{L}} V^{N} \big)  \big(  x-X_{s_{-}}^{i,N} + z  \big)   \notag \\  
 & \qquad \qquad \qquad \qquad \qquad \qquad \qquad  - \big(e^{(t-s+ h)\mathcal{L}} V^{N} \big)\big( x-X_{s_{-}}^{i,N}\big) \bigg\}
     d\tilde{\mathcal N}^{i}(dsdz).
\end{align}
This is the identity which we use below. We can also pass to the limit as
$h \rightarrow0$ and deduce%
\begin{align}
g_{t}^{N}&=e^{t\mathcal{L}}g_{0}^{N}+\int_{0}^{t}\nabla
e^{\left(  t-s\right) \mathcal{L}}\left(  V^{N} \ast\left(
F(\cdot \; ,\; g_{t}^{N})S_{s}^{N}\right)  \right)  ds \notag \\
&+ \frac{1}{N}\sum_{i=1}^{N} \int_{0}^{t} \int_{\RR^{d}-\{0\}}  \Big\{ \big(
e^{(t-s)\mathcal{L}} V^{N} \big)  \big(  x-X_{s_{-}}^{i,N} + z \big) \notag \\
&  \qquad \qquad \qquad \qquad \qquad \qquad \qquad  -   
   \big(e^{(t-s)\mathcal{L}} V^{N} \big)\big( x-X_{s_{-}}^{i,N}\big) \Big\} d\tilde{\mathcal N}^{i}(dsdz). \label{eq:mild2}
\end{align}
In what follows we denote by $M_t^N$ the martingale 
\begin{multline}
\label{eq:martingale}
  M_t^N:=\frac{1}{N}\sum_{i=1}^{N} \int_{0}^{t} \int_{\RR^{d}-\{0\}}  \Big\{ \big(
e^{(t-s+ h)\mathcal{L}} V^{N} \big)  \big(  x-X_{s_{-}}^{i,N} + z  \big)  \\-   
   \big(e^{(t-s+ h)\mathcal{L}} V^{N} \big)\big( x-X_{s_{-}}^{i,N}\big) \Big\} d\tilde{\mathcal N}^{i}(dsdz).
\end{multline}

\subsection{Uniform $\LL^2$ bounds}
\label{sec:bound}
\subsubsection{  First  estimate on $g^{N}$}

\begin{lemma}\label{estimation} For any $t\in[0,T]$ and $N\in\mathbb{N}$, it holds
\[
\mathbb{E} \Big[  \big\Vert \left(  I-A \right) ^{\epsilon/2}g_{t}%
^{N}\big\Vert _{\LL^{2}}\Big]  \leq
C_{\epsilon}.
\]
\end{lemma}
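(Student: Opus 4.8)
\textbf{Proof proposal for Lemma \ref{estimation}.}

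The plan is to start from the mild formulation \eqref{eq:mild2} for $g_t^N$ (or equivalently \eqref{mildh} with $h\to 0$), apply the operator $(I-A)^{\epsilon/2}$ to both sides, and estimate the $\LL^2$-norm of each of the three resulting terms separately. Writing $g_t^N = e^{t\mathcal L}g_0^N + I_t^N + M_t^N$ with $I_t^N$ the drift integral and $M_t^N$ the martingale from \eqref{eq:martingale}, I would bound $\E[\|(I-A)^{\epsilon/2} g_t^N\|_{\LL^2}]$ by the sum of the three corresponding expectations.

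For the initial term, since $e^{t\mathcal L}$ commutes with $(I-A)^{\epsilon/2}$ and is a contraction on $\LL^2$ (Step 2 of Proposition \ref{prop}), one has $\|(I-A)^{\epsilon/2}e^{t\mathcal L}g_0^N\|_{\LL^2}\le \|(I-A)^{\epsilon/2}g_0^N\|_{\LL^2} = \|g_0^N\|_{\HH^\epsilon}$, which is bounded uniformly in $N$ by the last bullet of Assumption \ref{assump} (note $g_0^N = V^N\ast S_0^N$). For the drift term, I would use $e^{(t-s)\mathcal L}\nabla f = \nabla e^{(t-s)\mathcal L}f$, write $(I-A)^{\epsilon/2}\nabla e^{(t-s)\mathcal L} = (I-A)^{\epsilon/2}\nabla (I-A)^{-\epsilon/2}\cdot (I-A)^{\epsilon/2} e^{(t-s)\mathcal L}$ so as to pull $(I-A)^{\epsilon/2}$ onto the semigroup and split off one derivative, and then apply Proposition \ref{prop} (with exponent slightly larger than $\epsilon/2$, absorbing the gradient as a half-derivative or a full one) to get a factor $C(t-s)^{-2\epsilon'/\alpha}$ with $2\epsilon'/\alpha<1$, which is integrable in $s$. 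The remaining factor $\|V^N\ast(F(\cdot,g_s^N)S_s^N)\|_{\LL^2}$ is controlled using $\|F\|_{\LL^\infty}$: since $S_s^N$ is a probability measure, $|V^N\ast(F(\cdot,g_s^N)S_s^N)|(x)\le \|F\|_\infty (V^N\ast S_s^N)(x) = \|F\|_\infty g_s^N(x)$, so $\|V^N\ast(F(\cdot,g_s^N)S_s^N)\|_{\LL^2}\le \|F\|_\infty \|g_s^N\|_{\LL^2}$; alternatively one bounds $\|V^N\ast \mu\|_{\LL^2}\le \|V^N\|_{\LL^2}$ crudely, but that is not uniform in $N$, so the first route (keeping $g_s^N$) combined with a Gr\"onwall argument on $\sup_{s\le t}\E\|g_s^N\|_{\HH^\epsilon}$ is cleaner — or one simply uses that $\|g_s^N\|_{\LL^2}\le \|g_s^N\|_{\HH^\epsilon}$ and closes the loop. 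The martingale term $\E[\|(I-A)^{\epsilon/2}M_t^N\|_{\LL^2}]$ is where I expect to need the separate lemma: its estimate is precisely the content of Lemma \ref{lemma martingale 1}, and I would simply cite that bound here.

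The main obstacle is the martingale term: controlling $\E\|(I-A)^{\epsilon/2}M_t^N\|_{\LL^2}$ requires a L\'evy-type maximal/It\^o-isometry estimate for the compensated Poisson integral, where the jump increments of $e^{(t-s)\mathcal L}V^N$ must be estimated using the Sobolev regularity $V\in\HH^{\epsilon+\delta}$, the smoothing \eqref{s1} of the semigroup, the moderate-interaction scaling $V^N(x)=N^\beta V(N^{\beta/d}x)$, and the conditions \eqref{eq:main}--\eqref{eq:delta} on $(\beta,\epsilon,\delta)$; this is exactly the "new analytic tool" the introduction warns about, and it is quarantined in Lemma \ref{lemma martingale 1}. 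Once that is granted, the rest is the standard mild-solution bookkeeping plus a Gr\"onwall inequality: collecting the three bounds gives $\E\|g_t^N\|_{\HH^\epsilon}\le C_\epsilon + C\int_0^t (t-s)^{-2\epsilon'/\alpha}\E\|g_s^N\|_{\HH^\epsilon}\,ds$, and a singular (fractional) Gr\"onwall lemma yields the uniform constant $C_\epsilon$.
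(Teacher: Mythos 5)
Your overall architecture (mild formulation, three terms, initial term from the assumption, martingale term quarantined in Lemma \ref{lemma martingale 1}, singular Gr\"onwall at the end) is the paper's architecture. The gap is in your treatment of the drift term, and it is not cosmetic. You propose to commute $(I-A)^{\epsilon/2}\nabla$ past the semigroup and absorb \emph{all} of it into $e^{(t-s)\mathcal L}$ via Proposition \ref{prop}, keeping only $\|V^N\ast(F(\cdot,g_s^N)S_s^N)\|_{\LL^2}\le\|F\|_\infty\|g_s^N\|_{\LL^2}$ on the other side. In the notation of Proposition \ref{prop} the operator $(I-A)^{\epsilon/2}\nabla$ costs the power $(\epsilon+1)/2$, hence a factor $(t-s)^{-(\epsilon+1)/\alpha}$; this is integrable only if $\epsilon<\alpha-1<1$, which contradicts $\epsilon>d/2$ from \eqref{eq:main} for every $d\ge 2$ (and for $d=1$ unless $\alpha>3/2$). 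Your claimed "$2\epsilon'/\alpha<1$" is therefore false in the regime the lemma is stated for, and the Gr\"onwall kernel you produce is not integrable.

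The paper avoids this by splitting the derivatives differently: only the gradient, i.e.\ $(I-A)^{1/2}$, is loaded onto $e^{(t-s)\mathcal L}$, giving the integrable singularity $(t-s)^{-1/\alpha}$ (here $\alpha>1$ is used), while the remaining $(I-A)^{\epsilon/2}$ stays on $e^{h\mathcal L}\left[V^{N}\ast\left(F(\cdot,g_{s}^{N})S_{s}^{N}\right)\right]$. But then the pointwise domination $|V^{N}\ast(F S_{s}^{N})|\le\|F\|_\infty g_{s}^{N}$ must be pushed through $(I-A)^{\epsilon/2}e^{h\mathcal L}$, and for $\epsilon>0$ a pointwise bound $|f|\le g$ does \emph{not} imply $\|(I-A)^{\epsilon/2}f\|_{\LL^2}\le\|(I-A)^{\epsilon/2}g\|_{\LL^2}$ in general. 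This is exactly what Lemma \ref{positi} is for: it shows $(I-A)^{\epsilon/2}e^{h\mathcal L}$ preserves positivity, so the domination survives and one gets the closed inequality with $\|(I-A)^{\epsilon/2}e^{h\mathcal L}g_{s}^{N}\|_{\LL^{2}(\Omega\times\RR^d)}$ inside the time integral. Your proposal never invokes this positivity argument, and it is also the reason the paper keeps the $e^{h\mathcal L}$ regularization throughout, sending $h\to 0$ only after Gr\"onwall, rather than working directly with \eqref{eq:mild2} as you do. Everything else in your outline is consistent with the paper.
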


\begin{proof}
\textsc{Step 1}. Let us use the $\LL^{2}$-norm $\left\Vert \cdot\right\Vert
_{\LL^{2}(  \Omega\times\mathbb{R}^{d})  }$ in the product space
$\Omega\times\mathbb{R}^{d}$ with respect to the product measure. From
\eqref{mildh} we  obtain 
\begin{align}
\big\Vert \left(  I-A\right)  ^{\epsilon/2}e^{h\mathcal{L}} g
_{t}^{N}&\big\Vert _{\LL^{2}(  \Omega\times\mathbb{R}^{d})  }\notag \\ &
\leq\big\Vert \left(  I-A\right)  ^{\epsilon/2}e^{(t+h)\mathcal{L}%
}g_{0}^{N}\big\Vert _{\LL^{2}(  \Omega\times\mathbb{R}%
^{d})  } \label{eq:1}\\
&  +\int_{0}^{t}\big\Vert \left(  I-A\right)  ^{\epsilon/2}\nabla
e^{\left(  t+h-s\right)\mathcal{L}}\left(   V^{N} \ast\left(
F(\cdot\; ,\; g_{s}^{N})S_{s}^{N}\right)  \right)  \big\Vert _{\LL^{2}(
\Omega\times\mathbb{R}^{d})  }ds \label{eq:2}\\
&  + \big\Vert  \left(  I-A\right)^{\epsilon/2} M_t^{N}    \big\Vert _{\LL^{2}(
\Omega\times\mathbb{R}^{d})  } \label{eq:3}
\end{align}
where $M_t^{N}$ has been defined in \eqref{eq:martingale}.

\medskip
\noindent \textsc{Step 2}. The first term \eqref{eq:1} can be estimated by%
\[
\big\Vert \left(  I-A\right)  ^{\epsilon/2}e^{(t+h)\mathcal{L}}g%
_{0}^{N}\big\Vert _{\LL^{2}(  \Omega\times\mathbb{R}^{d}) }  
\leq  \big\|  (  I-A)^{\epsilon/2} g_{0}^{N} \big\|_{\LL^{2}(\Omega\times\mathbb{R}^{d})} \leq C 
\]
where from now on we denote generically by $C>0$ any constant independent of
$N$. The boundedness of $\|  (  I-A)^{\epsilon/2} g_{0}^{N} \|_{\LL^{2}\left(\Omega\times\mathbb{R}^{d}\right)} $ is assumed  in  Assumption \ref{assump}. 

\medskip
\noindent \textsc{Step 3}. Let us come to the second term \eqref{eq:2} above:
%
\begin{align*}
&  \int_{0}^{t}\big\Vert \left(  I-A\right)  ^{\epsilon/2}\nabla
e^{\left(  t+h-s\right) \mathcal{L}}\left(  V^{N} \ast\left(
F(\cdot\; ,\;g_{s}^{N})S_{s}^{N}\right)  \right)  \big\Vert _{\LL^{2}(
\Omega\times\mathbb{R}^{d})  }ds\\
&    \leq  C \ \int_{0}^{t}\big\Vert (  I-A  )^{1/2}
e^{\left(  t-s\right) \mathcal{L}}\big\Vert _{\LL^{2}
\rightarrow \LL^{2}  }\ \big\Vert \left(  I-A\right)^{\epsilon/2}   e^{h\mathcal{L}}\left(
 V^{N} \ast\left(  F(\cdot\; ,\;g_{s}^{N})S_{s}^{N}\right)
\right)  \big\Vert _{\LL^{2}(  \Omega\times\mathbb{R}^{d})  }ds.
\end{align*}
 We have 
\begin{equation*}
 \big\Vert \left(  I-A\right)^{1/2}e^{\left(  t-s\right)\mathcal{L}} \big\Vert _{\LL^{2}  \rightarrow \LL^{2} } \leq \frac{C}{(t-s)^{ 1/\alpha}},
\end{equation*}
from Proposition \ref{prop}. 
On the other hand, for any $x \in \RR^d$, 
\[
\big\vert \left(  V^{N}\ast\left(  F(\cdot\; , \;g_{s}^{N})S_{s}^{N}\right)  \right)
\left(  x\right)  \big\vert \leq\left\Vert F\right\Vert _{\infty}\big|V^{N} \ast S_{s}^{N}  \left(  x\right)\big|
=\left\Vert F\right\Vert _{\infty} \; |g_{s}^{N}\left(  x\right)|.
\]
Then by Lemma \ref{positi} we have 
\[
\left\Vert   \left(  I-A\right)^{\epsilon/2} e^{h\mathcal{L}}\Big[V^{N}\ast\left(  F(\cdot\; , \;g_{s}^{N})S_{s}^{N}\right)
\Big]\right\Vert _{\LL^{2}(  \Omega\times\mathbb{R}^{d})  }\leq
C(F)\; \big\Vert  \left(  I-A\right) ^{\epsilon/2}  e^{h\mathcal{L}}g_{s}^{N}\big\Vert _{\LL^{2}(
\Omega\times\mathbb{R}^{d})  }.
\]
To summarize, we have%
\begin{multline*}
  \int_{0}^{t}  \big\Vert \left(  I-A\right)  ^{\epsilon/2}\nabla
e^{\left(  t+h-s\right) \mathcal{L}}\left(   V^{N} \ast\left(
F(\cdot\; , \; g_{s}^{N})S_{s}^{N}\right)  \right)  \big\Vert _{\LL^{2}(
\Omega\times\mathbb{R}^{d})  }ds\\
 \leq\int_{0}^{t}\frac{C}{\left(  t-s\right) ^{1/\alpha}}\big\Vert  \left(  I-A\right)  ^{\epsilon/2}  e^{h\mathcal{L}}g_{s}^{N}\big\Vert _{\LL^{2}(
\Omega\times\mathbb{R}^{d})  }ds.
\end{multline*}
\textsc{Step 4}. The estimate of the third term \eqref{eq:3} is quite tricky and we
postpone it to Lemma   \ref{lemma martingale 1} below, where we prove that $(I-A)^{\varepsilon /2}\; M_t^N$ is uniformly bounded in $\LL^2(\Omega\times \RR^d)$. Collecting the three
bounds together, we have%
\[
\left\Vert \left(  I-A\right)  ^{\epsilon/2}e^{h\mathcal{L}}g%
_{t}^{N}\right\Vert _{\LL^{2}(  \Omega\times\mathbb{R}^{d})  }%
\leq C  +\int_{0}^{t}\frac{C}{\left(  t-s\right)
^{1 /\alpha}}\big\Vert \left(  I-A\right)
^{\epsilon/2}e^{h\mathcal{L}}g_{s}^{N}\big\Vert _{\LL^{2}(
\Omega\times\mathbb{R}^{d})  }ds.
\]
We may apply a generalized version of Gronwall lemma 
and conclude 
\[
\big\Vert \left(  I-A\right)  ^{\epsilon/2}e^{h\mathcal{L}}g%
_{t}^{N}\big\Vert _{\LL^{2}(  \Omega\times\mathbb{R}^{d})  }%
\leq C.
\]
We may now take the limit as $h\rightarrow0$. The proof is complete.
\end{proof}

\begin{remark}
Taking $\varepsilon=0$ we get that $g_t^N$ is also uniformly bounded: for any $t \in [0,T]$ and $N \in \mathbb{N}$, 
\begin{equation}
\label{eq:L2g} \mathbb{E}\big[ \Vert g_t^N \Vert_{\LL^2}\big] \leq C.
\end{equation}
\end{remark}

\subsubsection{Martingale estimate}

\begin{lemma} \label{lemma martingale 1} For any $N\in\mathbb N$ it holds
\[
\big\Vert  \left(  I-A\right)^{\epsilon/2} M_t^{N}    \big\Vert _{\LL^{2}(
\Omega\times\mathbb{R}^{d})}  \leq C. 
\]
\end{lemma}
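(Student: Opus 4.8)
The martingale $M_t^N$ is a sum of $N$ independent purely-discontinuous martingales driven by the compensated Poisson random measures $\tilde{\mathcal N}^i$. The natural strategy is to compute the second moment of $\|(I-A)^{\epsilon/2}M_t^N\|_{\LL^2(\RR^d)}^2$ via the It\^o isometry for Poisson stochastic integrals, which turns the $\LL^2(\Omega)$-norm of the stochastic integral into an expectation of a $ds\,d\nu(z)$-integral of a squared increment. The key algebraic observation is that the operator $(I-A)^{\epsilon/2}$ commutes with the semi-group $e^{(t-s+h)\mathcal L}$ and with spatial translations, so $(I-A)^{\epsilon/2}$ applied to the integrand is simply $\big(K_{t-s+h}^N\big)(x - X_{s_-}^{i,N} + z) - \big(K_{t-s+h}^N\big)(x - X_{s_-}^{i,N})$ where $K_r^N := (I-A)^{\epsilon/2}e^{r\mathcal L}V^N$. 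First I would write, using independence and the fact that each term has mean zero,
\[
\EE\Big[\big\|(I-A)^{\epsilon/2}M_t^N\big\|_{\LL^2}^2\Big] = \frac{1}{N^2}\sum_{i=1}^N \EE\bigg[\int_0^t\!\!\int_{\RR^d-\{0\}}\big\|K_{t-s+h}^N(\cdot + z) - K_{t-s+h}^N(\cdot)\big\|_{\LL^2}^2\,d\nu(z)\,ds\bigg],
\]
where the $\LL^2$-norm in $x$ absorbs the (deterministic in law) shift by $X_{s_-}^{i,N}$.

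\textbf{Main estimate.} The sum over $i$ contributes a factor $N$, so the prefactor is $1/N$ and I am left to bound, uniformly in $h\geq 0$ and $t\le T$,
\[
\frac{1}{N}\int_0^t\!\!\int_{\RR^d-\{0\}}\big\|K_{r}^N(\cdot+z) - K_r^N(\cdot)\big\|_{\LL^2}^2\,d\nu(z)\,ds, \qquad r = t-s+h.
\]
Here I would split the $z$-integral at $|z|=1$. For $|z|\le 1$ I use the first inequality of Lemma \ref{maxi} — or more simply the bound $\|K_r^N(\cdot+z)-K_r^N(\cdot)\|_{\LL^2}\le |z|\,\|\nabla K_r^N\|_{\LL^2}$ on Fourier side one sees $\|f(\cdot+z)-f\|_{\LL^2}\le |z|\|\nabla f\|_{\LL^2}$ directly — together with $\int_{|z|\le 1}|z|^2\,d\nu(z)<\infty$, which is part of the standing assumption on $\nu$. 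For $|z|>1$ I use $\|K_r^N(\cdot+z)-K_r^N(\cdot)\|_{\LL^2}\le 2\|K_r^N\|_{\LL^2}$ together with $\nu(\{|z|>1\})<\infty$. In either case the task reduces to estimating $\|K_r^N\|_{\LL^2}$ and $\|\nabla K_r^N\|_{\LL^2}$. Now
\[
\|\nabla K_r^N\|_{\LL^2} = \big\|(I-A)^{\epsilon/2}\nabla e^{r\mathcal L}V^N\big\|_{\LL^2} \le \big\|(I-A)^{(1+\delta)/2}e^{r\mathcal L}\big\|_{\LL^2\to\LL^2}\,\big\|(I-A)^{(\epsilon-\delta)/2}V^N\big\|_{\LL^2}\cdot\|(I-A)^{\frac12-\frac{1+\delta}2 + \frac\delta2}\cdots\|
\]
— more cleanly: apply Proposition \ref{prop} to pull out $r^{-2\theta/\alpha}$ for an appropriate $\theta>0$ slightly larger than $1-\tfrac\alpha2$ (this is exactly where \eqref{eq:delta} enters, guaranteeing $1-\tfrac\alpha2<\delta$ so the time singularity $r^{-2\theta/\alpha}$ with $2\theta<\alpha$ is integrable in $s$), leaving $\|(I-A)^{(\epsilon+\delta)/2}V^N\|_{\LL^2} = \|V^N\|_{\HH^{\epsilon+\delta}}$. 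The final point is a scaling computation: since $V^N(x) = N^\beta V(N^{\beta/d}x)$, one checks by Fourier rescaling that $\|V^N\|_{\HH^{\epsilon+\delta}}^2 \asymp N^{\beta}\cdot N^{\frac{2\beta(\epsilon+\delta)}{d}}\|V\|_{\HH^{\epsilon+\delta}}^2$ (up to lower-order terms from the inhomogeneous part of the Bessel norm), so that $\tfrac1N\|V^N\|_{\HH^{\epsilon+\delta}}^2 \lesssim N^{\beta - 1 + \frac{2\beta(\epsilon+\delta)}{d}}$, which is bounded in $N$ precisely when $\epsilon+\delta \le \tfrac{(1-\beta)d}{2\beta}$ — this is the upper constraint in \eqref{eq:main}--\eqref{eq:delta}. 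Collecting, everything is $\le C$ uniformly in $N$, $h$, and $t$; letting $h\to 0$ finishes the proof.

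\textbf{Main obstacle.} The delicate point is the simultaneous balancing of two competing demands on the exponents: the time-integrability near $s=t$ forces the power of $(I-A)$ that we pull through the semi-group to exceed $1-\tfrac\alpha2$ (hence the need for the spare regularity $\delta$ of $V$, and the lower bound in \eqref{eq:delta}), while the $N$-scaling of $\|V^N\|_{\HH^{\epsilon+\delta}}$ forces $\epsilon+\delta$ to stay below $\tfrac{(1-\beta)d}{2\beta}$ (the upper bound in \eqref{eq:main}--\eqref{eq:delta}); the inequalities in Assumption \ref{assump} are exactly what make the window $\big(1-\tfrac\alpha2,\ \tfrac{(1-\beta)d}{2\beta}-\epsilon\big]$ for $\delta$ non-empty. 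A secondary technical care is that all of this must be done with the extra $e^{h\mathcal L}$ present and the bounds uniform in $h$ — but since $e^{h\mathcal L}$ is a contraction on every $\HH^s$ and commutes with $(I-A)^{\epsilon/2}$, one can either absorb it harmlessly or simply bound $\|(I-A)^{(\epsilon+\delta)/2}e^{h\mathcal L}V^N\|_{\LL^2}\le \|V^N\|_{\HH^{\epsilon+\delta}}$ and proceed as above.
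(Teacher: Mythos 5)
Your proposal is correct and follows essentially the same route as the paper: It\^o's isometry for the compensated Poisson integrals, the split of the $z$-integral at $|z|=1$ using $\nu(\{|z|>1\})<\infty$ for large jumps and $\int_{|z|\le 1}|z|^2\,d\nu(z)<\infty$ plus a gradient bound for small jumps, the semi-group smoothing of Proposition \ref{prop} with the spare regularity $\delta\in(1-\tfrac{\alpha}{2},\tfrac{(1-\beta)d}{2\beta}-\epsilon]$ to keep the time singularity integrable, and the $N$-scaling of $\|V^N\|_{\HH^{\epsilon+\delta}}$. The only cosmetic difference is that you bound the small-jump increment by $|z|\,\|\nabla K_r^N\|_{\LL^2}$ directly on the Fourier side, whereas the paper routes the same estimate through the Hardy--Littlewood maximal function of Lemma \ref{maxi}; both yield the identical bound.
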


\begin{proof} \textsc{Step 1}. We decompose $M_t^N$ according to the sum of two integrals, over $|z|\leq 1$ and $|z| \geq 1$, as follows: 
\begin{align*}
& M_t^{N}
 = I_1 + I_2 \\
 & = \frac{1}{N}\sum_{i=1}^{N} \int_{0}^{t} \int_{|z|\geq 1}  \Big\{ \big(
e^{(t-s+ h)\mathcal{L}} V^{N} \big)  \big(  x-X_{s_{-}}^{i,N} + z  \big)  -   
   \big(e^{(t-s+ h)\mathcal{L}} V^{N}\big)\big( x-X_{s_{-}}^{i,N}\big) \Big\} d\tilde{\mathcal N}^{i}(dsdz)\\
& +   \frac{1}{N}\sum_{i=1}^{N} \int_{0}^{t} \int_{|z|\leq1}  \Big\{ \big(
e^{(t-s+ h)\mathcal{L}} V^{N} \big)  \big(  x-X_{s_{-}}^{i,N} + z  \big)  -   
   \big(e^{(t-s+ h)\mathcal{L}} V^{N} \big)\big( x-X_{s_{-}}^{i,N}\big)\Big\}  d\tilde{\mathcal N}^{i}(dsdz). 
\end{align*}
When $|z|\geq 1$ we can use the fact that $\int_{|z|\geq 1} d\nu(z) < \infty$. When $|z|\leq 1$ this is not true, but we know that $\int_{|z|\leq 1} |z|^2 d\nu(z) < \infty$. Therefore, for this second case, we use the tool of the maximal function introduced in Section \ref{sec:max}. For the sake of clarity, along this proof we denote
\begin{equation}\label{eq:fn}
 F^N(s,x,z) := V^{N}  \big(  x-X_{s_{-}}^{i,N} + z  \big)  -   
   V^{N}\big( x-X_{s_{-}}^{i,N}\big).
\end{equation}

\medskip

\noindent 
\textsc{Step 2}. Let us start with $I_1$. We have 
\begin{align*}
  \big\Vert (  I&-A)^{\epsilon/2}  I_1\big\Vert _{\LL^{2}(  \Omega\times
\mathbb{R}^{d})  }^{2}\\
&  =
\frac{2}{N^{2}} \int_{\mathbb{R}^{d}}\mathbb{E}\Bigg[  \Bigg\vert \sum_{i=1}^{N} \int_{0}^{t}
\int_{|z|\geq 1}  \left(  I-A\right)^{\epsilon/2}  e^{(t-s+ h)\mathcal{L}} F^N(s,x,z) 
 d\tilde{\mathcal N}^{i}(dsdz)   \Bigg\vert ^{2}\Bigg]  dx\\
&  =\frac{2}{N^{2}}     \sum_{i=1}^{N} \int_{\mathbb{R}^{d}}\mathbb{E}\left[  \int_{0}^{t}
\int_{|z|\geq 1} \Big| \left(  I-A\right)^{\epsilon/2}  e^{(t-s+ h)\mathcal{L}} F^N(s,x,z) \Big|^{2}
  d\nu(z) ds    \right]  dx \\
&  = \frac{2}{N}  \;   \mathbb{E} \left[  \int_{0}^{t} \int_{|z|\geq 1}  \int_{\mathbb{R}^{d}}
 \Big| \left(  I-A\right)^{\epsilon/2}  e^{(t-s+ h)\mathcal{L}} F^N(s,x,z) \Big|^{2} \ dx \      d\nu(z) ds    \right].  
\end{align*}
 We observe  that 
\begin{align*}
 \int_{\mathbb{R}^{d}}
 \Big| \left(  I-A\right)^{\epsilon/2}  e^{(t-s+ h)\mathcal{L}} F^N(s,x,z) \Big|^{2} \ dx &\leq 2 \int_{\mathbb{R}^{d}}
 \Big| \left(  I-A\right)^{\epsilon/2}  e^{(t-s+ h)\mathcal{L}}  V^{N}(x) \Big|^{2} \ dx  \\
 &\leq  C \big\|  V^{N} \big\|_{\HH^{\epsilon}}^{2}.
\end{align*}
This implies that
\[
 \big\Vert \left(  I-A\right)^{\epsilon/2}  I_1\big\Vert _{\LL^{2}(  \Omega\times
\mathbb{R}^{d})  }^{2}\leq 
\frac{C}{N}  \big\| V^{N} \big\|_{\HH^{\epsilon}}^{2} \leq
\frac{C N^{ \beta + \frac{2 \epsilon\beta}{d}} \|V\|^2_{\HH^\epsilon}}{N} \leq C,
\]
where in the last inequality we use Assumption \ref{assump}. 

\medskip 

\noindent \textsc{Step 3}. In the same way we obtain 
\begin{align*}
 \big\Vert (  I&-A)^{\epsilon/2}  I_2\big\Vert _{\LL^{2}(  \Omega\times
\mathbb{R}^{d})  }^{2}\\
&  =
\frac{2}{N^{2}} \int_{\mathbb{R}^{d}}\mathbb{E}\Bigg[  \Bigg\vert \sum_{i=1}^{N} \int_{0}^{t}
\int_{|z|\leq 1 }  \left(  I-A\right)^{\epsilon/2}  e^{(t-s+ h)\mathcal{L}} F^N(s,x,z) 
 d\tilde{\mathcal N}^{i}(dsdz)   \Bigg\vert ^{2}\Bigg]  dx\\
&  =\frac{2}{N^{2}}     \sum_{i=1}^{N} \int_{\mathbb{R}^{d}}\mathbb{E}\left[  \int_{0}^{t}
\int_{|z|  \leq 1} | \left(  I-A\right)^{\epsilon/2}  e^{(t-s+ h)\mathcal{L}} F^N(s,x,z) |^{2}
 d\nu(z) ds     \right]  dx \\
&  = \frac{2}{N^{2}}     \sum_{i=1}^{N}\mathbb{E} \left[  \int_{0}^{t} \int_{|z|\leq  1}  \int_{\mathbb{R}^{d}}
 | \left(  I-A\right)^{\epsilon/2}  e^{(t-s+ h)\mathcal{L}} F^N(s,x,z) |^{2} \ dx    d\nu(z) ds    \right]  
\end{align*}
where $F^N$ has been defined in \eqref{eq:fn}.
We observe that 
\begin{align*}
(  I-A)^{\epsilon/2}  e^{(t-s+ h)\mathcal{L}} F^N(s,x,z) 
\leq &\;  C |z|\; \mathbb{M} \Big[ \nabla(I-A)^{\epsilon/2} \; e^{(t-s+ h)\mathcal{L}} V^N\big(x-X_{s_{-}}^{i,N} + z \big)\Big]\\
 & + C |z| \; \mathbb{M}\Big[ \nabla(I-A)^{\epsilon/2} \; e^{(t-s+ h)\mathcal{L}} V^N\big(x-X_{s_{-}}^{i,N}\big) \Big].
\end{align*}
From  Lemma \ref{maxi} we have  
\[
\Big\|  \mathbb{M} \Big[\nabla(I-A)^{\epsilon/2}  e^{(t-s+ h)\mathcal{L}} V^{N} \Big] \Big\|_{\LL^{2}}^{2}\leq C
\Big\|\nabla  (I-A)^{\epsilon/2}  e^{(t-s+ h)\mathcal{L}} V^{N} \Big\|_{\LL^{2}}^{2}.
\]
This implies that 
\[
\Big\| (  I-A)^{\epsilon/2}  e^{(t-s+ h)\mathcal{L}} F^N(s,x,z)  \Big\|_{\LL^2}^{2} \leq C \; |z|^2
\; \Big\|\nabla  (I-A)^{\epsilon/2}  e^{(t-s+ h)\mathcal{L}} V^{N} \Big\|_{\LL^{2}}^{2}.
\]
Taking $\delta\in \big(1-\frac{\alpha}{2}, \frac{(1-\beta)d}{2\beta}-\epsilon\big]$ (which exists by Assumption \ref{assump})  we have
\begin{align*}
 \big\Vert \left(  I-A\right)^{\epsilon/2}  I_2\big\Vert _{\LL^{2}(  \Omega\times
\mathbb{R}^{d})  }^{2}&  \leq    \frac{C}{N}  \int_{0}^{t} \big\|\nabla  (I-A)^{\epsilon/2}  e^{(t-s+ h)\mathcal{L}} V^{N} \big\|_{\LL^{2}}^{2}
 ds  \\
&  \leq    \frac{C}{N}   \big\| (I-A)^{(\epsilon +\delta)/2} V^{N}\big\|_{\LL^{2}}^{2} 
 \int_{0}^{t} \frac{1}{(t+h-s)^{2(1-\delta)/\alpha}} ds \leq C
\end{align*}
where we used that        
\[
\big\Vert V^{N}\big\Vert _{\HH^{\epsilon + \delta}}^{2}\leq C  N^{\beta+ \frac{2 (\epsilon+\delta)\beta}{d}  } \big\Vert V \big\Vert
_{\HH^{\epsilon + \delta}}^{2} \leq C
\]
and $ \|V\|_{\HH^{\epsilon + \delta}} < \infty$ from Assumption \ref{assump}. 
\end{proof}

\subsubsection{Second  estimate on $g^{N}$}

\begin{lemma}\label{estimation2} For any $\gamma  \in(0,\frac{1}{2})$ and $N\in\mathbb{N}$, it holds
\[
\EE\left[ \int_{0}^{T}\int_{0}^{T}  \frac{ \big\|g_t^{N}-g_s^{N}\big\|_{\HH^{-2}}^{2}}{|t-s|^{1+ 2\gamma}}      \ ds \ dt \right]  \leq C.
\]

\end{lemma}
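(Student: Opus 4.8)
The goal is a fractional-in-time Sobolev (Aronszajn--Slobodeckij) bound on $g^N$ valued in the (very weak) space $\HH^{-2}$, which is exactly the kind of estimate that feeds a Simon-type compactness argument later. The natural route is to go back to the mild identity \eqref{eq:mild2} for $g_t^N$, write the increment $g_t^N-g_s^N$ (say for $s<t$) as a sum of three contributions: (i) the difference of the free terms $e^{t\mathcal L}g_0^N - e^{s\mathcal L}g_0^N$; (ii) the difference of the drift (Duhamel) integrals $\int_0^t \nabla e^{(t-r)\mathcal L}(V^N\ast(F(\cdot,g_r^N)S_r^N))\,dr - \int_0^s(\cdots)\,dr$; and (iii) the difference of the martingale terms $M_t^N - M_s^N$. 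For each piece I would estimate $\|\,\cdot\,\|_{\HH^{-2}}$ pointwise in $(t,s,\omega)$ (or in expectation), obtaining a bound of the form $\mathrm{const}\cdot|t-s|^{\kappa}$ for some $\kappa$ that I can take as close to $1$ as I like but in any case $\kappa > 2\gamma$ when $\gamma<1/2$; then the double integral $\int_0^T\int_0^T |t-s|^{\kappa - 1 - 2\gamma}\,ds\,dt$ converges.

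\textbf{The three pieces.} For the free term, $e^{t\mathcal L}-e^{s\mathcal L} = e^{s\mathcal L}(e^{(t-s)\mathcal L}-I)$, and $\|(e^{(t-s)\mathcal L}-I)f\|_{\HH^{-2}} \le C|t-s|^{2/\alpha}\|f\|_{\LL^2}$ (write $e^{(t-s)\mathcal L}-I = \int_0^{t-s}\mathcal L e^{r\mathcal L}\,dr$ and use that $\mathcal L$ maps $\LL^2\to\HH^{-2}$ boundedly via $(I-A)^{-1}\mathcal L$ together with Proposition \ref{prop} at $\epsilon = 0$; actually just $\|\mathcal L e^{r\mathcal L}\|_{\LL^2\to\HH^{-2}}\le C$ since $(I-A)^{-1}\mathcal L$ is bounded on $\LL^2$); applying $\|g_0^N\|_{\LL^2}\le \|V^N\ast S_0^N\|_{\LL^2}$, bounded by Assumption \ref{assump}, gives the desired $|t-s|^{2/\alpha}$ rate. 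For the drift term, split into $\int_s^t \nabla e^{(t-r)\mathcal L}(\cdots)\,dr$ (length of the interval) plus $\int_0^s (\nabla e^{(t-r)\mathcal L}-\nabla e^{(s-r)\mathcal L})(\cdots)\,dr$; using $\|\nabla e^{(t-r)\mathcal L}\|_{\LL^2\to\HH^{-2}}\le \|(I-A)^{-1}\nabla e^{(t-r)\mathcal L}\|_{\LL^2\to\LL^2}\le C$ (here the two extra derivatives in $\HH^{-2}$ absorb the $\nabla$ and leave a bounded operator) and, for the second chunk, $\|(e^{(t-s)\mathcal L}-I)\|_{\LL^2\to\HH^{-2}}\le C|t-s|^{2/\alpha}$ again, together with the pointwise bound $|V^N\ast(F(\cdot,g_r^N)S_r^N)(x)|\le\|F\|_\infty\, g_r^N(x)$ and the $\LL^2$ bound \eqref{eq:L2g}, one gets a rate $|t-s| + |t-s|^{2/\alpha}$. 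The martingale increment $M_t^N - M_s^N$ is handled by the It\^o isometry as in Lemma \ref{lemma martingale 1}: $\mathbb E\|(I-A)^{-1}(M_t^N-M_s^N)\|_{\LL^2}^2$ reduces, after expanding the stochastic integral, to a time integral of $\|(I-A)^{-1}e^{(t-r+h)\mathcal L}F^N\|_{\LL^2}^2$ over $r\in(s,t)$ plus a $|z|^2$-weighted piece, plus the contribution from $r<s$ where the difference $(e^{(t-r)\mathcal L}-e^{(s-r)\mathcal L})$ again yields a $|t-s|^{2/\alpha}$ factor; since $(I-A)^{-1}$ is far more smoothing than the $(I-A)^{\epsilon/2}$ of Lemma \ref{lemma martingale 1} and $\|V^N\|_{\LL^2}^2 \le C N^{\beta}$ only costs a factor that the $1/N$ kills, no smallness of $\beta$ beyond what is already assumed is needed — indeed in $\HH^{-2}$ the estimate is easier than the $\HH^\epsilon$ one.

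\textbf{Conclusion and the main obstacle.} Collecting, for $0\le s\le t\le T$ one obtains $\mathbb E\|g_t^N - g_s^N\|_{\HH^{-2}}^2 \le C\big(|t-s|^{4/\alpha} + |t-s|^{2} + |t-s|^{2/\alpha}\big)\le C|t-s|^{2\theta}$ with $\theta := \min\{1, 1/\alpha\} = 1/\alpha > 1/2$ since $\alpha<2$; dividing by $|t-s|^{1+2\gamma}$ and integrating over $[0,T]^2$ leaves $\int_0^T\int_0^T |t-s|^{2\theta - 1 - 2\gamma}\,ds\,dt < \infty$ precisely because $2\theta - 1 - 2\gamma > -1 \iff \gamma < \theta$, and $\gamma<1/2<1/\alpha=\theta$. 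By Fubini this gives the claim with a constant $C$ independent of $N$ and of $h$ (then let $h\to 0$). The main obstacle, and the point that needs care, is the martingale increment for $r<s$: one must verify that the stochastic-integral representations of $M_t^N$ and $M_s^N$ on the common interval $[0,s]$ differ only through the semigroup factor (the integrands $F^N(r,x,z)$ are the same), so that the It\^o isometry produces a clean $\|(e^{(t-r)\mathcal L}-e^{(s-r)\mathcal L})e^{(s-r+h)\mathcal L}V^N\|$-type term to which the bound $\|e^{(t-s)\mathcal L}-I\|_{\LL^2\to\HH^{-2}}\le C|t-s|^{2/\alpha}$ applies uniformly in $r$; and one must check that the $|z|\le 1$ part, after the maximal-function step, still integrates against $d\nu(z)$ — here the extra $(I-A)^{-1}$ makes $\delta$-type conditions unnecessary because we have room to spare. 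Everything else is a routine application of Proposition \ref{prop}, \eqref{eq:L2g}, and Lemma \ref{maxi}.
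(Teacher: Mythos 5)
Your route is viable but genuinely different from the paper's, and noticeably more laborious. The paper does \emph{not} use the mild formulation here: it goes back to the pre-Duhamel It\^o identity for $e^{h\mathcal L}g^N$, so that $g_t^N-g_s^N$ is \emph{directly} a sum of integrals over $[s,t]$ (drift term, $\tfrac12\int_s^t\mathcal L g_r^N\,dr$, and a martingale increment on $[s,t]$), with no free term and no semigroup differences at all. Cauchy--Schwarz in time gives $(t-s)^2$ for the two Lebesgue integrals (the key point being $\|\mathcal L g\|_{\HH^{-2}}\le C\|g\|_{\LL^2}$, proved by duality via Zhang's lemma --- this is exactly where the very weak space $\HH^{-2}$ is used), and the It\^o isometry gives $C(t-s)N^{\beta-1}\le C(t-s)$ for the martingale, whence $\EE\|g_t^N-g_s^N\|_{\HH^{-2}}^2\le C|t-s|$ and the double integral converges for $\gamma<1/2$. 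Your mild-form decomposition buys nothing here (the $\HH^{-2}$ topology is already weak enough to absorb $\mathcal L$ directly) and costs you all the $\|(e^{\tau\mathcal L}-I)\|$ estimates and the $r<s$ bookkeeping that you correctly identify as the delicate point.

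One concrete error: the bound $\|(e^{\tau\mathcal L}-I)f\|_{\HH^{-2}}\le C\tau^{2/\alpha}\|f\|_{\LL^2}$ is not what your own derivation yields. Writing $e^{\tau\mathcal L}-I=\int_0^\tau\mathcal L e^{r\mathcal L}\,dr$ and using $\|\mathcal L e^{r\mathcal L}\|_{\LL^2\to\HH^{-2}}\le C$ gives the rate $\tau^{1}$; since $2/\alpha>1$, the exponent $2/\alpha$ is a strictly \emph{stronger} (smaller for small $\tau$) claim that is unjustified, and the same overstatement propagates to your claimed $|t-s|^{2/\alpha}$ rate for the squared martingale increment, whose $[s,t]$ portion can only be linear in $t-s$ by the It\^o isometry. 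This does not sink the lemma --- the correct rate $\EE\|g_t^N-g_s^N\|^2_{\HH^{-2}}\le C|t-s|$ is exactly what $\gamma<1/2$ requires --- but the stated intermediate exponents should be corrected, and the $r<s$ drift and martingale chunks then need the slightly more careful pairing $\|(e^{(t-s)\mathcal L}-I)\nabla e^{(s-r)\mathcal L}f\|_{\HH^{-2}}\le C(t-s)(s-r)^{-1/\alpha}\|f\|_{\LL^2}$ (integrable in $r$ since $\alpha>1$) rather than the uniform bound you assert.
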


\begin{proof}  In this proof we use the fact that $\LL^2(\RR^d) \subset \mathbb{W}^{-2,2}(\RR^d)$ with continuous embedding, and that the linear operator $\Delta$ is bounded from $\LL^2(\RR^d)$ to $\mathbb{W}^{-2,2}(\RR^d)$.

\medskip

\noindent \textsc{Step 1}.  We recall the formula
\begin{align*}
\big(  e^{h\mathcal{L}}&g_{t}^{N}\big)  (x)    =\big(  e^{h\mathcal{L}}g_{0}^{N}\big)  (x)+\int_{0}^{t}\left\langle S_{s}%
^{N},F(\cdot \; ,\; g_{s}^{N})\nabla\big(  e^{h\mathcal{L}} V^{N}
\big)  \left(  x-\cdot\right)  \right\rangle ds\\
&  +\frac{1}{2}\int_{0}^{t}\mathcal{L} \big(  e^{h\mathcal{L}}g_{s}%
^{N}\big)  (x)ds\\
& + \frac{1}{N}\sum_{i=1}^{N} \int_{0}^{t} \int_{\RR^{d}-\{0\}}  \Big\{ \big(
e^{h\mathcal{L}} V^{N} \big)  \big(  x-X_{s_{-}}^{i,N}+ z  \big)  -   
   \big(e^{h\mathcal{L}} V^{N}\big)\big( x-X_{s_{-}}^{i,N}\big)\Big\}  d\tilde{\mathcal N}^{i}(dsdz).
\end{align*}
Then we have 
\begin{align*}
\big( e^{h\mathcal{L}} &g_{t}^{N}\big)  (x) - \big( e^{h\mathcal{L}}g_{s}^{N}\big)  (x) = 
\int_{s}^{t}\left\langle S_{r}%
^{N},F(\cdot \; ,\; g_{r}^{N})\nabla\big(  e^{h\mathcal{L}} V^{N}
\big)  \left(  x-\cdot\right)  \right\rangle dr\\
&  +\frac{1}{2}\int_{s}^{t}\mathcal{L} \big(  e^{h\mathcal{L}}g_{r}%
^{N}\big)  (x)dr\\
& + \frac{1}{N}\sum_{i=1}^{N} \int_{s}^{t} \int_{\RR^{d}-\{0\}}\Big\{   \big(
e^{h\mathcal{L}} V^{N} \big)  \big(  x-X_{r_{-}}^{i,N}+ z  \big)  -   
   \big(e^{h\mathcal{L}} V^{N}\big)\big( x-X_{r_{-}}^{i,N}\big) \Big\} d\tilde{\mathcal N}^{i}(drdz).
\end{align*}
Thus by letting $h\to 0$ we obtain, for $s <t$,  
\begin{align*}
 \EE \Big[ & \big\| g_{t}^{N}-g_{s}^{N}  \big\|_{\HH^{-2}}^{2}\Big] \\ 
 & \leq (t-s)  \int_{s}^{t} \EE \Big[ \big\|\nabla
\left(  V^{N} \ast\left(
F(\cdot \; ,\; g_{r}^{N})S_{r}^{N}\right)  \right)\big\|_{\HH^{-2}}^{2}\Big]  dr     +   \frac{t-s}{2}\int_{s}^{t}  \EE \Big[ \big\|\mathcal{L} g_{r}^{N} \big\|_{\HH^{-2}}^{2}\Big] dr
\\ & 
+   \EE\bigg[\bigg\| \frac{1}{N}\sum_{i=1}^{N} \int_{s}^{t} \int_{\RR^{d}-\{0\}}   \Big\{( V^{N})  \big(  x-X_{r_{-}}^{i,N} + z \big) -   
   ( V^{N})\big( x-X_{r_{-}}^{i,N}\big) \Big\} d\tilde{\mathcal N}^{i}(drdz)\bigg\|_{\HH^{-2}}^{2}\bigg].
\end{align*}

\noindent \textsc{Step 2}. We observe that 
\begin{align*}
 \int_{s}^{t}  \EE \Big[\Big\|\nabla \left(  V^{N} \ast\left(
F(\cdot\; ,\; g_{r}^{N})S_{r}^{N}\right)  \right)\Big\|_{\HH^{-2}}^{2}\Big]  dr   
&\leq  \int_{s}^{t} \EE \Big[\Big\|
\left(  V^{N} \ast\left(
F(\cdot \; ,\; g_{r}^{N})S_{r}^{N}\right)  \right)\Big\|_{\HH^{-1}}^{2}\Big]  dr
\\ & 
\leq  \int_{s}^{t}  \Big\|
\left(  V^{N} \ast\left(
F(\cdot\; ,\; g_{r}^{N})S_{r}^{N}\right)  \right)\Big\|_{\LL^{2}(\Omega\times \RR)}^{2}  dr
\\ & 
\leq  \int_{s}^{t}  \big\| g_r^{N}\big\|_{\LL^{2}(\Omega\times \RR)}^{2}  dr \leq C (t-s),
\end{align*}
where in last step we used Lemma  \ref{estimation} (see \eqref{eq:L2g}).

\medskip
\noindent \textsc{Step 3}. Moreover
\begin{align*}
  \EE\bigg[\bigg\| \frac{1}{N}\sum_{i=1}^{N} & \int_{s}^{t} \int_{\RR^{d}-\{0\}}   \big(
 V^{N} \big)  \big(  \cdot -X_{r_{-}}^{i,N} + z \big) -   
   \big( V^{N} \big)\big(  \cdot -X_{r_{-}}^{i,N}\big)  d\tilde{\mathcal N}^{i}(drdz)\bigg\|_{\HH^{-2}}^{2} \bigg]\\
& =   \frac{1}{N^{2}}\sum_{i=1}^{N} \int_{s}^{t} \int_{\RR^{d}-\{0\}}  \big\| \big(
 V^{N} \big)  \big(   \cdot -X_{r_{-}}^{i,N} + z \big) -   
   \big( V^{N} \big)\big(  \cdot -X_{r_{-}}^{i,N}\big) \big\|_{\HH^{-2}}^{2}\; dr d\nu(z) \\ 
   & =   \frac{1}{N^{2}}\sum_{i=1}^{N} \int_{s}^{t} \int_{|z|\geq 1}  \big\| \big(
 V^{N} \big)  \big(   \cdot-X_{r_{-}}^{i,N} + z \big) -   
   \big( V^{N} \big)\big(  \cdot -X_{r_{-}}^{i,N}\big) \big\|_{\HH^{-2}}^{2}\; dr d\nu(z) \\ 
& +   \frac{1}{N^{2}}\sum_{i=1}^{N} \int_{s}^{t} \int_{|z|\leq 1}  \big\| \big( V^{N} \big)  \big(   \cdot -X_{r_{-}}^{i,N} + z \big) -   
   \big( V^{N} \big)\big(  \cdot -X_{r_{-}}^{i,N}\big) \big\|_{\HH^{-2}}^{2}\; dr d\nu(z).
\end{align*}

\noindent \textsc{Step 4}. We observe 
\begin{multline*} 
   \frac{1}{N^{2}}\sum_{i=1}^{N}  \int_{s}^{t} \int_{|z|\geq 1}  \big\| \big(
 V^{N} \big)  \big(  \cdot -X_{r_{-}}^{i,N} + z \big) -   
  \big( V^{N} \big)\big(  \cdot -X_{r_{-}}^{i,N}\big)\big \|_{\HH^{-2}}^{2}\;  dr d\nu(z) \\
  \leq \frac{2}{N^{2}}\sum_{i=1}^{N} \int_{s}^{t} \int_{|z|\geq 1}  \big\| 
 V^{N}   \big\|_{\HH^{-2}}^{2}\;  dr d\nu(z)   \leq \frac{C}{N} \int_{s}^{t}   \big\|  V^{N}    \big\|_{\LL^2}^{2}\;  dr \leq C (t-s),
\end{multline*}
where in the last step we use the fact that $\|V^N\|_{\LL^2}^2 \leq N^\beta \|V\|_{\LL^2}^2$. 

\medskip 

\noindent \textsc{Step 5}. Let us write
\begin{align*}
   \frac{1}{N^{2}}\sum_{i=1}^{N}&  \int_{s}^{t} \int_{|z|\leq 1}   \big\| \big(
 V^{N} \big)  \big(   \cdot -X_{r_{-}}^{i,N} + z \big) -   
   \big(V^{N} \big)\big(  \cdot -X_{r_{-}}^{i,N}\big) \big\|_{\HH^{-2}}^{2} \; dr d\nu(z) \\ 
& \leq   \frac{C}{N^{2}}\sum_{i=1}^{N} \int_{s}^{t} \int_{|z|\leq 1}  \big\| \big(
 V^{N} \big)  \big(   \cdot -X_{r_{-}}^{i,N} + z \big) -   
  \big( V^{N}\big)\big(  \cdot -X_{r_{-}}^{i,N}\big) \big\|_{\HH^{-2}}^{2}\; dr d\nu(z) \\
   & \leq   \frac{C}{N^{2}}\sum_{i=1}^{N} \int_{s}^{t} \int_{|z|\leq 1} |z|^{2}\; \big\| \mathbb{M}\big(
 V^{N}\big) \big\|_{\HH^{-2}}^{2} \;dr d\nu(z)  \\
& \leq   \frac{C}{N} \int_{s}^{t}\big\| \nabla \big(
 V^{N} \big) \big\|_{\HH^{-2}}^{2}\; dr   \leq   \frac{C}{N} \int_{s}^{t}\big\|
 V^{N}  \big\|_{\LL^{2}}^{2} \; dr\leq  C(t-s).
\end{align*}
\textsc{Step 6}.  Finally we  claim that 
\[
\frac{1}{2}\int_{s}^{t}  \EE  \Big[\big\|\mathcal{L} g_{r}^{N}\big\|_{\HH^{-2}}^{2}\Big] dr
\leq \frac{C}{2} \int_{s}^{t}  \EE \Big[\big\| g_{r}^{N}\big\|_{\LL^{2}}^{2} \Big] dr\leq  C (t-s). 
\]
The first inequality above can be proved as follows:  
\begin{align}
\big \|\mathcal{L} g_{r}^{N}\big\|_{\HH^{-2}}&=\sup_{\| f\|_{\HH^2}\leq 1}|<\mathcal{L} g_{r}^{N},f>|
\notag \\ & 
=\sup_{\| f\|_{\HH^2}\leq 1}|< g_{r}^{N},\mathcal{L} f>|\leq \sup_{\| f\|_{\HH^2}\leq 1} \Big\{  \| g_{r}^{N}\big\|_{\LL^2} \  \|\mathcal{L} f \big\|_{\LL^2}\Big\}
 \notag \\ &
\leq  \  C \sup_{\| f\|_{\HH^2}\leq 1}\Big\{ \| g_{r}^{N}\big\|_{2} \big( \|\Delta f \big\|_{\LL^2} + \|\nabla f \big\|_{\LL^2} \big)\Big\} \label{ineq} \\ & \leq  C \ \| g_{r}^{N}\big\|_{\LL^2}, \notag
\end{align}
 where  we used \cite[Lemma 2.4]{Zhang2} to obtain  \eqref{ineq}. 
\end{proof}

\subsection{Criterion of compactness} 
\label{subsect compactness}

In this  subsection we follow  the arguments of  \cite[Section 6.1]{flan}.  We start by constructing one space on which the sequence of the laws of $g_\cdot^N$ will be tight.

 A version of the Aubin-Lions Lemma, see 
\cite{FlaGat,Lions}, states that when $E_{0}\subset E\subset E_{1}$ are three
Banach spaces with continuous dense embeddings, with $E_{0},E_{1}$ reflexive, and 
$E_{0}$ compactly embedded into $E$, given $p,q\in(  1,\infty)  $
and $\gamma\in(  0,1)  $, the space $\LL^{q}( [ 0,T]\; ;\; E_{0})
\cap \mathbb{W}^{\gamma,p}( [ 0,T] \; ;\; E_{1})  $ is compactly embedded into
$\LL^{q}(  [0,T] \; ;\; E)  $.

We use this lemma with $E=\mathbb{W}^{\eta,2}(  D)$,
$E_{0}=\mathbb{W}^{\epsilon,2}(  D)$ with
$
\frac{d}{2}<\eta<\epsilon,
$ and $E_{1}=\mathbb{W}^{-2,2}(  \mathbb{R}^{d})$ 
where $D$ is a regular bounded domain. We also choose  
 $0<\gamma<\frac{1}{2}$ in order to apply Lemma \ref{estimation2} (see below). The Aubin-Lions Lemma states that \[\LL^{2}\big([0,T]\; ;\; \mathbb{W}^{\epsilon,2}(  D) \big)  \cap \mathbb{W}^{\gamma,2}\big(
[0,T]\; ;\; \mathbb{W}^{-2,2}(  \mathbb{R}^{d}) \big)  \] is compactly embedded
into $\LL^{2}\big(  [0,T]\; ;\; \mathbb{W}^{\eta,2}(  D) \big)  $. Now, consider the space
\[
Y_{0}:=\LL^{\infty}\big(  [0,T]\; ;\; \LL^{2}(  \mathbb{R}^{d})  \big)  \cap
\LL^{2}\big(  [0,T]\; ;\; \mathbb{W}^{\epsilon,2}(  \mathbb{R}^{d}) \big)  \cap
\mathbb{W}^{\gamma,2}\big(  [0,T]\; ; \; \mathbb{W}^{-2,2}(  \mathbb{R}^{d}) \big)  .
\]
Using the Fr\'{e}chet topology on $\LL^{2}\big( [ 0,T]\; ;\; \mathbb{W}_{\rm loc}^{\eta,2}(  \mathbb{R}^{d})  \big)  $ defined as
\[
d\left(  f,g\right)  =\sum_{n=1}^{\infty}2^{-n}\left(  1\wedge\int_{0}%
^{T}\big\Vert f (  t,\cdot )  \big\Vert _{\mathbb{W}^{\eta
,2} (  \mathcal{ B}(  0,n )  )  }^{2}dt\right)
\]
one has that $\LL^{2}\big(  [0,T]\; ;\; \mathbb{W}^{\epsilon,2}(  \mathbb{R}^{d})
\big)  \cap \mathbb{W}^{\gamma,2}\big(  [0,T]\; ;\; \mathbb{W}^{-2,2}(  \mathbb{R}^{d})
\big)  $ is compactly embedded\footnote{The proof is
elementary, using the fact that if a set is compact in $\LL^{2}\big(
[0,T]\; ;\; \mathbb{W}_{\rm loc}^{\eta,2} (  \mathcal{B} (  0,n )   ) \big)
$ for every $n$ then it is compact in $\LL^{2}\big(  [0,T]\; ;\; \mathbb{W}_{\rm loc}^{\varepsilon,2}(  \mathbb{R}^{d})  \big)  $ with this topology.} into $\LL^{2}\big( [ 0,T]\; ;\; \mathbb{W}_{\rm loc}^{\eta,2}(  \mathbb{R}^{d})  \big)$. 

Let us denote respectively by $\LL_{w\ast}^{\infty} $ and $\LL_{w}^{2}$ the  spaces
$\LL^{\infty} $ and
$\LL^{2}  $
endowed respectively with the weak star and weak topology. We have that
$Y_{0}$ is compactly embedded into%
\begin{equation}
Y:=\LL_{w\ast}^{\infty}\big([  0,T]\; ;\; \LL^{2}(  \mathbb{R}^{d})  \big)
\cap \LL_{w}^{2}\big( [ 0,T]\; ;\; \mathbb{W}^{\epsilon,2}(  \mathbb{R}^{d})  \big)
\cap \LL^{2}\big(  [0,T]\; ;\; \mathbb{W}_{\rm loc}^{\eta,2}(  \mathbb{R}^{d})  \big). \label{topology of convergence}%
\end{equation}
Note that%
\[
\LL^{2}\big( [ 0,T]\; ;\; \mathbb{W}_{\rm loc}^{\eta,2}(  \mathbb{R}^{d})
\big)  \subset \LL^{2}\big(  [0,T]\; ;\; C(  D)  \big)
\]
for every regular bounded domain $D\subset\mathbb{R}^{d}$.

\medskip

 Let us now go back to the sequence of processes $\{g_\cdot^N\}_N$, for which we have proved several estimates. The  Chebyshev inequality ensures that 
\[
\PP\big( \|g_{\cdot}^{N} \|_{Y_{0}}^{2} > R\big)\leq \frac{\EE \big[\| g_{s}^{N}\|_{Y_{0}}^{2}\big]}{R}, \qquad \text{for any } R>0.
\]
Thus by Lemma \ref{estimation} and Lemma \ref{estimation2} we obtain 
\[
\PP\big( \|g_{\cdot}^{N} \|_{Y_{0}}^{2} > R\big)\leq \frac{C}{R}, \qquad \text{for any } R>0, N \in \NN.
\]
The process $(g_{t}^{N})_{t\in[0,T]}$ defines a probability $\P_{N}$ on $Y$. Last inequality implies that there exists a bounded set $B_{\epsilon}\in Y_{0}$ 
such that $\P_{N}(B_{\epsilon})< 1-\epsilon$ for all $N$, and therefore there exists a compact set $
K_{\epsilon}\in Y$ such  that $\P_{N}(K_{\epsilon})< 1-\epsilon$.

Denote by $\{  \texttt{L}^{N}\}_{N\in\mathbb{N}}$ the laws of the processes $\{
g^{N}\}  _{N\in\mathbb{N}}$ on $Y_{0}$,  we have proved that $\{
\texttt{L}^{N}\}  _{N\in\mathbb{N}}$ is tight in $Y$, hence relatively compact,
by Prohorov's Theorem. From every subsequence of $\{  \texttt{L}^{N}\}_{N\in\mathbb{N}}$ it is possible to extract a further subsequence which
converges to a probability measure $\texttt{L}$ on $Y$. Moreover  by a Theorem of
Skorokhod (see \cite[Theorem 2.7]{Ikeda}), we are allowed, eventually after choosing a suitable probability space  where all our random variables can be defined, to assume
\begin{equation*}
g^{N} \rightarrow \ u\quad \text{in }   Y, \qquad \text{a.s.}
\end{equation*}
where the law of $u$ is $\texttt{L}$.

\subsection{Passing to the limit} \label{sec:limit}

In this paragraph we show that the limit $u$ of $g^N$ satisfies the weak formulation \eqref{limiteq} of the non-local conservation equation \eqref{PDEintr}. 

\textsc{Step 1}. By It\^o's formula we have, for any test function $\phi$, that 
\begin{align*}
\int g_{t}^{N}(x) & \phi(x) \;  dx     =\ \int  g_{0}^{N} (x)  \phi(x) \;  dx +
\int_{0}^{t}\Big\langle S_{s}%
^{N},F(\cdot\; ,\; g_{s}^{N})\nabla (V^{N}\ast\phi) \left(\cdot\right)  \Big\rangle \; ds\\
&  +\frac{1}{2}\int_{0}^{t}  \int  g_{s}
^{N}(x)    \mathcal{L}\phi(x) \;  dx  ds\\
& + \frac{1}{N}\sum_{i=1}^{N} \int_{0}^{t} \int_{\RR^{d}-\{0\}}   \Big\{   (V^{N}\ast\phi) \big(  -X_{s_{-}}^{i,N}+ z  \big)  -   
    (V^{N}\ast\phi) \big( -X_{s_{-}}^{i,N}\big) \Big\} \; d\tilde{\mathcal N}^{i}(dsdz).
\end{align*}
Passing to the limit we obtain 
\begin{align*}
\int &u(t,x)  \phi(x) \;  dx     = \int  u_{0}(x)  \phi(x) \;  dx +
\lim_{N\rightarrow \infty}\int_{0}^{t}\Big\langle S_{s}%
^{N},F(\cdot \; ,\; g_{s}^{N})\nabla  (V^{N}\ast\phi) \left(\cdot\right)  \Big\rangle ds\\
&  +\frac{1}{2}\int_{0}^{t}  \int  u(s,x)    \mathcal{L}\phi(x) \;  dx  ds\\
& + \lim_{N\rightarrow \infty} \frac{1}{N}\sum_{i=1}^{N} \int_{0}^{t} \int_{\RR^{d}-\{0\}}     \Big\{  (V^{N}\ast\phi) \big(-X_{s_{-}}^{i,N}+ z  \big)  -   (V^{N}\ast\phi) \big(-X_{s_{-}}^{i,N}\big)  \Big\}\;  d\tilde{\mathcal N}^{i}(dsdz).
\end{align*}
\textsc{Step 2}. We claim that 
\begin{equation}\label{eq:claim}
\lim_{N\rightarrow \infty}\int_{0}^{t}\Big\langle S_{s}%
^{N},F(\cdot \; ,\; g_{s}^{N})\nabla  (V^{N}\ast\phi) \left(\cdot\right)  \Big\rangle ds= \int_{0}^{t} \int u(s,x) F(x,u) \nabla \phi(x) \; dx  ds.
\end{equation} 
In order to prove the claim, we first observe, using the symmetry of $V$, that 
\begin{multline*}
\Big| \Big\langle S_{s}^{N},F(\cdot \; ,\; g_{s}^{N})\nabla  (V^{N}\ast\phi) \left(\cdot\right)  \Big\rangle
-\Big\langle g_{s}^{N},F(\cdot \; ,\; g_{s}^{N})\nabla  (V^{N}\ast\phi) \left(\cdot\right)  \Big\rangle \Big|
\\
\leq \sup_{x\in \RR^{d} }\Big| F(x ,\; g_{s}^{N}(x))\nabla  (V^{N}\ast\phi)(x)  
- \big( (F(\cdot \; ,\; g_{s}^{N})\nabla  (V^{N}\ast\phi) \left(\cdot\right)) \ast  V^{N} \big)(x) \Big|,
\end{multline*}
 We can control the last term, using the fact that \begin{itemize} \item $V$ is a density (denoted below by $(\int V =1)$), 
 \item  $F$ is Lipschitz and bounded (denoted below by ($F\in$ Lip$\cap L^\infty$)), \item $V$ is compactly supported (denoted below by ($V$ is c.s.)),
 \item and $\phi$ is compactly supported and smooth, 
  \end{itemize}  as follows:
\begin{align*}
  \Big|F(&x ,\;g_{s}^{N}(x))\nabla  (V^{N}\ast\phi)(x)  
- \big( (F(\cdot \; ,\; g_{s}^{N})\nabla (V^{N}\ast\phi) \left(\cdot\right)) \ast V^{N} \big)(x) \Big| \vphantom{\Bigg\{}\\
&
 \overset{\substack{\hphantom{(F\in\mathrm{Lip}\cap L^\infty)}\\(\int V=1)}}{\leq}   \int V(y) \ \big|\nabla (V^{N}\ast\phi)(x)\big| \
 \Big| F\big( x,  g_{s}^{N}(x)\big)-F\Big( x -\frac{y}{N^{\frac{\beta}{d}}} , 
g_{s}^{N}\Big(x-\frac{y}{N^{\frac{\beta}{d}}} \Big)\Big)\Big| dy \notag \\
 & \qquad \qquad
 +    \int V(y)  \ \Big|\nabla (V^{N}\ast\phi)(x)-\nabla(V^{N}\ast\phi)\Big(x- \frac{y}{N^{\frac{\beta}{d}}} \Big)  \Big| \ 
\big|F( x,  g_{s}^{N}(x))\big|dy 
 \\ &
\overset{(F\in\mathrm{Lip}\cap L^\infty)}{\leq}  C \int V(y) \ \big|\nabla (V^{N}\ast\phi)(x)\big| \
 \Big| g_{s}^{N}(x)-g_{s}^{N}\Big(x-\frac{y}{N^{\frac{\beta}{d}}} \Big)\Big| dy \\ 
 & \qquad \qquad 
 +  \frac{C}{N^{\frac{\beta}{d}}}  \int V(y) |y|     dy \\
& 
 \overset{\substack{\hphantom{(F\in\mathrm{Lip}\cap L^\infty)}\\(V \mathrm{\; is  \; c.s.})}}{\leq}   \frac{C}{N^{\frac{\tilde\eta \beta}{d}}}  \ \sup_{x,y\in K} \frac{\Big| g_{s}^{N}(x)-g_{s}^{N}(y)\Big|}{|x-y|^{\tilde\eta}} \ \int V(y)  |y|^{\tilde\eta} \  dy \\ 
 & \qquad  \qquad +   \frac{C}{N^{\frac{\beta}{d}}}  \int V(y) |y|     dy ,
\end{align*}
where $K$ is a compact set and $\tilde\eta=\eta-\frac{d}{2}$, where $\eta$ has been defined in Section \ref{subsect compactness}. 
Therefore we have obtained 
\[
 \mathbb E\bigg[\Big|F(x ,\; g_{s}^{N}(x))\nabla  (V^{N}\ast\phi)(x)  
- \big( (F(\cdot \; ,\; g_{s}^{N})\nabla  (V^{N}\ast\phi) \left(\cdot\right)) \ast V^{N} \big)(x) \Big|\bigg]
\leq  \frac{C}{N^{\frac{\tilde\eta \beta}{d}}} . 
\]
Thus,
\begin{align*}
\lim_{N\rightarrow \infty}\int_{0}^{t}  \Big\langle S_{s}%
^{N},F(\cdot \; ,\; g_{s}^{N}) \; &\nabla  (V^{N}\ast\phi) \left(\cdot\right)  \Big\rangle  ds \\
& =\lim_{N\rightarrow \infty}\int_{0}^{t}\Big\langle g_{s}%
^{N},F(\cdot \; ,\; g_{s}^{N})\nabla  (V^{N}\ast\phi) \left(\cdot\right)  \Big\rangle ds.
\\ & =
\lim_{N\rightarrow \infty}\int_{0}^{t}   \int  g_{s}^{N}(x)  F(x,g_{s}^{N}) \nabla  (V^{N}\ast\phi) \left(x\right)  \; dx ds \\
& =  \int  u(s,x)  F(x,u(s,x)) \nabla \phi(x)  \; dx ds
\end{align*}
 where in the last equality we used that $g_s^{N}\rightarrow u$ strongly in 
$\LL^{2}\left( [ 0,T]\; ;\; C(  D)  \right)$. 

\medskip

\noindent \textsc{Step 3}. We  claim that 
\begin{equation}\label{m}
 \lim_{N\rightarrow \infty} \frac{1}{N}\sum_{i=1}^{N} \int_{0}^{t} \int_{\RR^{d}-\{0\}}      \Big\{ (V^{N}\ast\phi) \big(-X_{s_{-}}^{i,N}+ z  \big)  -   (V^{N}\ast\phi) \big(-X_{s_{-}}^{i,N}\big)  \Big\} d\tilde{\mathcal N}^{i}(dsdz)=0.
\end{equation}
First we observe that 
\begin{align}
 \lim_{N\rightarrow \infty} & \EE\bigg[ \bigg|\frac{1}{N}\sum_{i=1}^{N} \int_{0}^{t} \int_{|z|\geq 1}   \Big\{    (V^{N}\ast\phi) \big(-X_{s_{-}}^{i,N}+ z  \big)  -   (V^{N}\ast\phi) \big(-X_{s_{-}}^{i,N}\big) \Big\}  d\tilde{\mathcal N}^{i}(dsdz)\bigg|^{2}\bigg] \notag
\\ 
& =\lim_{N\rightarrow \infty}    \frac{1}{N^{2}} \sum_{i=1}^{N} \int_{0}^{t} \int_{|z|\geq 1} \Big|(V^{N}\ast\phi) \big(-X_{s_{-}}^{i,N}+ z  \big)  -   (V^{N}\ast\phi) \big(-X_{s_{-}}^{i,N}\big)\Big|^{2}  \;   d\nu(z)ds \notag \\
& \leq  \lim_{N\rightarrow \infty}  \frac{C}{N}= 0. \label{m1}
\end{align}
On other hand  we have 
\begin{align}
 \lim_{N\rightarrow \infty} & \EE \bigg[ \bigg|\frac{1}{N}\sum_{i=1}^{N} \int_{0}^{t} \int_{|z|\leq 1}    \Big\{   (V^{N}\ast\phi) \big(-X_{s_{-}}^{i,N}+ z  \big)  -   (V^{N}\ast\phi) \big(-X_{s_{-}}^{i,N}\big)  \Big\} d\tilde{\mathcal N}^{i}(dsdz)\bigg|^{2}\bigg] \notag \\
& =\lim_{N\rightarrow \infty}    \frac{1}{N^{2}} \sum_{i=1}^{N} \int_{0}^{t} \int_{|z|\leq 1} \Big|(V^{N}\ast\phi) \big(-X_{s_{-}}^{i,N}+ z  \big)  -   (V^{N}\ast\phi) \big(-X_{s_{-}}^{i,N}\big)\Big|^{2}  \;   d\nu(z)ds  \notag \\
& \leq   C \lim_{N\rightarrow \infty}    \frac{1}{N^{2}} \sum_{i=1}^{N} \int_{0}^{t} \int_{|z|\leq 1} |z|^{2} \;  d\nu(z) ds   
 \leq C \lim_{N\rightarrow \infty} \frac{1}{N}=0. \label{m2}
\end{align}
From \eqref{m1} and \eqref{m2} we conclude \eqref{m}. Summarizing, we have proved \eqref{limiteq}.

\subsection{Uniqueness of PDE} \label{sec:unique}
In order to make the paper self-contained we present the uniqueness result for the PDE \eqref{PDEintr}. 
We also refer  the reader to  \cite{ali,Dro,Karc}.

\begin{theorem}
\label{Thm PDE}. 
There is at most one weak solution of equation \eqref{PDEintr} in  $\LL^{2}\big( [ 0,T]\; ;\;\HH^{\eta} \big)$ with $\eta> \frac{d}{2}$.   
\end{theorem}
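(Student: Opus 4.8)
The plan is to reduce the statement to a linear stability estimate for the difference of two solutions, closed by the smoothing bound of Proposition~\ref{prop} together with the subcriticality $\alpha>1$; this is the classical route for fractal conservation laws, see \cite{ali,Dro,Karc}. Let $u_1,u_2\in\LL^2([0,T];\HH^\eta)$ be two weak solutions of \eqref{PDEintr} with the same initial datum $u_0$, and set $w:=u_1-u_2$. By the same variation-of-constants manipulation as in Section~\ref{sec:eq} (testing the weak formulation \eqref{limiteq} against $e^{(t-s)\mathcal{L}}\phi$ and using the self-adjointness of $\mathcal{L}$ for a symmetric $\alpha$-stable process) one passes to the mild formulation
\[
u_i(t)=e^{t\mathcal{L}}u_0-\int_0^t e^{(t-s)\mathcal{L}}\,\mathrm{div}\big(F(\cdot,u_i)\,u_i\big)(s)\,ds,\qquad i=1,2.
\]
The first step I would carry out is to upgrade every such weak solution to a bounded one, $u_i\in\LL^\infty((0,T)\times\RR^d)$. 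Indeed, the scaling of $\rho_t$ used in the proof of Proposition~\ref{prop} gives $\big\|e^{t\mathcal{L}}\,\mathrm{div}\,G\big\|_{\LL^\infty}\le C\,t^{-1/\alpha}\|G\|_{\LL^\infty}$ for vector fields $G$; since $F\in\LL^\infty$ one has $\|F(\cdot,u_i)u_i(s)\|_{\LL^\infty}\le\|F\|_\infty\|u_i(s)\|_{\LL^\infty}$; and, because $\eta>d/2$, the embedding $\HH^\eta\hookrightarrow\LL^\infty$ gives $\|u_i(\cdot)\|_{\LL^\infty_x}\in\LL^2([0,T])$ to begin with, while $u_0\in\HH^\epsilon\subset\LL^\infty$ by Assumption~\ref{assump} (as $\epsilon>d/2$) controls the free term $e^{t\mathcal{L}}u_0$. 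Since $1/\alpha<1$, convolution against $t^{-1/\alpha}$ raises the time-integrability exponent of $\|u_i(\cdot)\|_{\LL^\infty_x}$ by the fixed amount $1-\tfrac1\alpha$ at each iteration of the Duhamel formula, so after finitely many steps $u_i\in\LL^\infty((0,T)\times\RR^d)$.

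With $u_1,u_2$ bounded, subtracting the two mild formulations yields $w(t)=-\int_0^t e^{(t-s)\mathcal{L}}\,\mathrm{div}\big(F(\cdot,u_1)u_1-F(\cdot,u_2)u_2\big)(s)\,ds$. The map $r\mapsto r\,F(x,r)$ is Lipschitz, uniformly in $x$, on the bounded range of $u_1$ and $u_2$, with a constant $\Lambda$ depending only on $\|F\|_\infty$, $\mathrm{Lip}(F)$ and $\|u_1\|_\infty,\|u_2\|_\infty$, so $\big\|\big(F(\cdot,u_1)u_1-F(\cdot,u_2)u_2\big)(s)\big\|_{\LL^\infty}\le\Lambda\|w(s)\|_{\LL^\infty}$; combining with the smoothing bound,
\[
\|w(t)\|_{\LL^\infty}\le C\Lambda\int_0^t(t-s)^{-1/\alpha}\,\|w(s)\|_{\LL^\infty}\,ds,\qquad t\in[0,T],
\]
with $\|w(\cdot)\|_{\LL^\infty}\in\LL^\infty([0,T])$ and $1/\alpha<1$. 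The singular Gronwall (Henry) lemma then gives $w\equiv0$, which proves the theorem. As a variant, closer to the entropy-solution literature, one may write $F(\cdot,u_1)u_1-F(\cdot,u_2)u_2=b\,w$ with $b\in\LL^\infty$ and run an $\LL^1$-contraction argument based on Kato's inequality for the positivity-preserving semigroup $e^{t\mathcal{L}}$ (compare Lemma~\ref{positi}): it yields $\partial_t|w|-\mathcal{L}|w|+\mathrm{div}(b|w|)\le0$ distributionally, hence $\tfrac{d}{dt}\|w(t)\|_{\LL^1}\le0$ and $w\equiv0$ since $w(0)=0$ — at the cost of first knowing $w\in C([0,T];\LL^1)$.

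The delicate point is the boundedness step. If one tries to run the Gronwall argument directly in $\LL^2([0,T];\HH^\eta)$, one only controls $\|u_i(\cdot)\|_{\LL^\infty_x}$ in $\LL^2_t$, so the relevant kernel is $(t-s)^{-1/\alpha}$ times an only-$\LL^2$-in-time coefficient, against which a direct Cauchy--Schwarz fails because $2/\alpha>1$ makes $(t-s)^{-2/\alpha}$ non-integrable. The point is therefore to first absorb the nonlinearity into a genuinely bounded solution; this is exactly where the subcriticality $\alpha>1$ is used, both to make $e^{t\mathcal{L}}\,\mathrm{div}$ integrable in time near $s=t$ and to make the integrability bootstrap terminate after finitely many steps.
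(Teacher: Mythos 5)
Your proof is correct, and it diverges from the paper's argument precisely at the step where the estimate must be closed. Both proofs start from the same Duhamel/mild formulation and both rely on the smoothing bound of Proposition~\ref{prop}, i.e.\ $\|\nabla e^{(t-s)\mathcal{L}}\|_{\LL^2\to\LL^2}\lesssim (t-s)^{-1/\alpha}$. The paper, however, stays in $\LL^2_x$: it mollifies with $\rho_\varepsilon$, splits $u^1F(\cdot,u^1)-u^2F(\cdot,u^2)$ using the Lipschitz bound on $F$ and the embedding $\HH^\eta\hookrightarrow C_b$ (for $\eta>d/2$), arrives at
\[
\|U(t)\|_{\LL^2}\le \int_0^t (t-s)^{-1/\alpha}\big(C+\|u^2(s)\|_{\LL^\infty}\big)\,\|U(s)\|_{\LL^2}\,ds ,
\]
and then invokes Gronwall. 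The coefficient $C+\|u^2(\cdot)\|_{\LL^\infty}$ is only square-integrable in time in the class $\LL^2([0,T];\HH^\eta)$, and since $2/\alpha>1$ a direct H\"older/Cauchy--Schwarz absorption of the singular kernel fails --- this is exactly the obstruction you identify in your last paragraph, and the paper's appeal to a ``generalized Gronwall lemma'' glosses over it (it also writes the exponent as $1/2$ rather than the $1/\alpha$ its own Proposition~\ref{prop} yields, which does not help). Your proof buys a clean closure by first bootstrapping each solution into $\LL^\infty((0,T)\times\RR^d)$, using Young's inequality to raise the time-integrability of $\|u_i(t)\|_{\LL^\infty_x}$ by the fixed amount $1-\tfrac1\alpha$ per Duhamel iteration; after that, $r\mapsto rF(x,r)$ is genuinely Lipschitz on the range of the solutions, the Gronwall coefficient is a constant, and the singular Gronwall (Henry) lemma applies without further ado. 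The only external input you use beyond the theorem's hypotheses is $u_0\in\LL^\infty$ (to control the free term $e^{t\mathcal{L}}u_0$ in the bootstrap), which you correctly extract from Assumption~\ref{assump} via $u_0\in\HH^\epsilon$, $\epsilon>d/2$; the paper's statement, read literally as a standalone PDE result, does not grant this, so it is worth flagging that your uniqueness class is marginally smaller (or that the hypothesis $u_0\in\LL^\infty$ should be added). In short: same skeleton, but your preliminary $\LL^\infty$ upgrade repairs the one step the paper leaves fragile.
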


\begin{proof}
Let $u^{1},u^{2}$ be two weak solutions of the equation \eqref{PDEintr}
 with the same initial condition $u_{0}$. Let $\{\rho_{\varepsilon}%
(x)\}_{\varepsilon}$ be a family of standard symmetric mollifiers. For any
$\varepsilon>0$ and $x\in\mathbb{R}^{d}$ we can use $\rho_{\varepsilon
}(x-\cdot)$ as test function in the equation \eqref{limiteq}.  Set
$u_{\varepsilon}^{i}(t,x)=u^{i}(t,\cdot)\ast\rho_{\varepsilon}(\cdot)(x)$ for
$i=1,2$. Then we have%
\[
u_{\varepsilon}^{i}(t,x)=(u_{0}\ast\rho_{\varepsilon})(x)+\int_{0}^{t} \mathcal{L}
u_{\varepsilon}^{i}(s,x)\,ds+\int_{0}^{t}\big(\nabla \rho_{\varepsilon}\ast u^{i} F(\cdot,u^{i})\big)(s,x)  \,ds.
\]
Writing this identity in mild form we obtain (we write $u^{i}\left(  t\right)
$ for the function $u^{i}\left(  s,\cdot\right)  $ and $S(t)$ for $e^{tL}$)%
\[
u_{\varepsilon}^{i}(t)=S(t)(u_{0}\ast\rho_{\varepsilon})+\int_{0}%
^{t}S(t-s)\ \big(\nabla \rho_{\varepsilon}\ast u^{i} F(\cdot,u^{i})\big)  ds.
\]
The function $U=u^{1}-u^{2}$ satisfies 
\[
\rho_{\varepsilon}\ast U(t)=\int_{0}^{t} \nabla S(t-s) \big( \rho_{\varepsilon}\ast \big[ u^{1} F(\cdot,u^{1})- u^{2} F(\cdot,u^{2})\big] \big) \,ds.
\]
Thus we obtain 
\[
\|\rho_{\varepsilon}\ast U(t)\|_{\LL^{2}}\leq 
\int_{0}^{t} \Big\|  \nabla S(t-s) \big( \rho_{\varepsilon}\ast \big[ u^{1} F(\cdot,u^{1})- u^{2} F(\cdot,u^{2})\big] \big) \Big\|_{\LL^{2}} \,ds.
\]
Using the proposition \ref{prop} we have 
\[
\|\rho_{\varepsilon}\ast U(t)\|_{\LL^{2}}\leq 
\int_{0}^{t}  \frac{1}{(t-s)^{\frac{1}{2}}}  \Big\|   \rho_{\varepsilon}\ast \big[ u^{1} F(\cdot,u^{1})- u^{2} F(\cdot,u^{2})\big]  \Big\|_{\LL^{2}} \,ds.
\]
Taking the limit as $\varepsilon\rightarrow0$ we  arrive 
\[
\| U(t)\|_{\LL^{2}}\leq 
\int_{0}^{t}  \frac{1}{(t-s)^{\frac{1}{2}}}  \Big\|  \big[ u^{1} F(\cdot,u^{1})- u^{2} F(\cdot,u^{2})\big]  \Big\|_{\LL^{2}} \,ds.
\]
By easy calculation we have 
\[
\| U(t)\|_{\LL^{2}}\leq 
\int_{0}^{t}  \frac{1}{(t-s)^{\frac{1}{2}}}  \big\|  U  F(\cdot,u^{1})  \big\|_{\LL^{2}}  +  \big\|  u^{2} ( F(\cdot,u^{1}) - F(\cdot,u^{1}))  \big\|_{\LL^{2}} \,ds.
\]
Notice that the function $F$ is globally Lipschitz and bounded. It follows
\[
\| U(t)\|_{\LL^{2}}
\leq \int_{0}^{t}    \frac{1}{(t-s)^{\frac{1}{2}}} \big( C   \|  U   \|_{\LL^{2}} +  \|  u^{2}  U  \|_{\LL^{2}} \big)  \,ds.
\]
By hypothesis $u^{2} \in \LL^{2}(  [0,T]\; ;\; \HH^{\eta}) $ with $\eta> \frac{d}{2}$. Then by the Sobolev embeddings (see \cite[Section 2.8.1]{Triebel}),
we have $u^{2}\in \LL^{2}\big(  [0,T]\; ;\; C_{b}( \mathbb{R}^{d}) \big) $. It follows that 
\[
\| U(t)\|_{\LL^{2}}
\leq \int_{0}^{t}    \frac{1}{(t-s)^{\frac{1}{2}}}\big( C   \|  U   \|_{\LL^{2}}  +  \| u^{2}\|_{\LL^{\infty}}\;  \| U \|_{\LL^{2}} \big) \,ds.
\]
By Gronwall's Lemma we conclude $U=0$.
\end{proof}

\subsection{Convergence in probability}

\begin{corollary}\label{coroconveP2} The sequence $\{g^{N}\}_{N\in\NN}$ converges in probability to $u$.
\end{corollary}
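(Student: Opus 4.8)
The plan is to invoke the Gyongy--Krylov criterion (Lemma \ref{GK}) with the Polish space $\Psi = Y$ introduced in \eqref{topology of convergence}. From Section \ref{subsect compactness} we already know that the family $\{\texttt{L}^{N}\}_{N}$ of laws of $g^{N}$ on $Y$ is tight; consequently, given any two subsequences $\{g^{N_\ell}\}_\ell$ and $\{g^{N_m}\}_m$, the family of joint laws of the pairs $(g^{N_\ell}, g^{N_m})$ on $Y\times Y$ is tight as well, since each of its two marginals is tight. By Prohorov's Theorem we extract a further subsequence $v_k=(g^{N_{\ell(k)}},g^{N_{m(k)}})$ whose laws converge weakly to a probability measure $\texttt{Q}$ on $Y\times Y$, and by Skorokhod's representation Theorem (cf. \cite{Ikeda}) we may assume, on a suitable probability space, that $v_k \to v=(v^{1},v^{2})$ almost surely in $Y\times Y$.

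The key step is then to show that both components $v^{1}$ and $v^{2}$ are weak solutions of \eqref{PDEintr} with the \emph{same} initial datum $u_{0}$. This is exactly the content of Section \ref{sec:limit}: the uniform bounds of Lemma \ref{estimation}, Lemma \ref{lemma martingale 1} and Lemma \ref{estimation2} are available for each of the two marginal sequences $\{g^{N_{\ell(k)}}\}_k$ and $\{g^{N_{m(k)}}\}_k$; the hypothesis that $\{S_0^N\}_N$ converges weakly in probability to the deterministic measure $u_{0}(x)\,dx$ gives the correct initial condition along each subsequence; and the passage to the limit carried out in Steps~1--3 of Section \ref{sec:limit} applies verbatim to $g^{N_{\ell(k)}}$ and to $g^{N_{m(k)}}$, so that $v^{1}$ and $v^{2}$ both satisfy \eqref{limiteq} with datum $u_{0}$ and both belong to $\LL^{2}([0,T];\HH^{\eta})$ with $\eta>d/2$. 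Applying the uniqueness statement of Theorem \ref{Thm PDE} we conclude $v^{1}=v^{2}$ almost surely, i.e.~$\texttt{Q}$ is supported on the diagonal $\{(x,y)\in Y\times Y : x=y\}$.

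Lemma \ref{GK} now yields that $\{g^{N}\}_{N}$ converges in probability, in the topology of $Y$, to some $Y$-valued random element $\bar u$. To identify $\bar u$ with $u$, it suffices to note that convergence in probability implies convergence in law, hence along any subsequence $g^{N}$ converges in law to the law of $\bar u$; running once more the identification argument of Section \ref{sec:limit} together with the uniqueness of Theorem \ref{Thm PDE} shows that $\bar u$ is the (deterministic) weak solution $u$ of \eqref{PDEintr}. I expect the main—though not really difficult—obstacle to be the bookkeeping in re-applying the limit passage of Section \ref{sec:limit} to the joint object on the Skorokhod space and checking that the two coordinates solve the same Cauchy problem; all the analytic estimates needed for this are already in place.
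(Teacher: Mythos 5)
Your proposal is correct and follows essentially the same route as the paper: tightness of the joint laws of the pairs $(g^{N},g^{M})$ on $Y\times Y$, Prohorov and Skorokhod to get an almost sure limit $(v^{1},v^{2})$, identification of both components as weak solutions of \eqref{PDEintr} with datum $u_{0}$, uniqueness from Theorem \ref{Thm PDE} to place the limit law on the diagonal, and finally the Gyongy--Krylov criterion. Your closing paragraph identifying the limit with the deterministic solution $u$ is a small but welcome addition that the paper's own proof leaves implicit.
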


\begin{proof} 
 We denote  the joint law of $ (g^{N} ,g^{M} )$  by $\nu^{N,M}$. Similarly  to the proof of tightness for $g^{N}$  we have that the family $\{\nu^{N,M}\}$ is tight in    $Y \times  Y$.

Let us take any subsequence $ \nu^{N_k,M_k} $. By  Prohorov's  theorem, it is
relatively weakly compact hence it contains a weakly convergent subsequence. Without
loss of generality we may assume that the original sequence $\{\nu^{N,M} \}$ itself converges
weakly to a measure $\nu$. According to the Skorokhod immersion  theorem, we
infer the existence of a probability space  $\big(  \bar{\Omega}, \bar{\mathcal{F}}, \bar{\mathbb{P}} \big)$   with a sequence of random variables
$(\bar{g}^{N}, \bar{g}^{M})$ converging almost surely in $Y \times Y $ to random variable  $   (\bar{u}, \check{u})$ and
 the laws of $(\bar{g}^{N}, \bar{g}^{M}) $  and $ (\bar{u}, \check{u})$
  under $ \bar{\mathbb{P}}$  coincide with  $  \nu^{N,M} $ and   $\nu$, respectively.

	Analogously, it can be applied to both  $\bar{g}^{N}$   and $ \bar{g}^{M}$  in order
to show that  $\bar{u}$ and $\check{u}$    are  two solutions of the PDE \eqref{limiteq}. By the uniqueness property of the solutions to  \eqref{limiteq} we have  $\bar{u}=\check{u} $. 
 Therefore 
	$$
	\nu  \big((x,y)\in Y\times Y \; : \; x=y\big)= \bar{\mathbb{P}} (\bar{u}=\check{u})=1. 
$$
Now, we have all in hands to apply  Gyongy-Krylov's characterization
of convergence in probability (Lemma \ref{GK}).  It implies that the original
sequence is  defined on the initial probability space converges in probability
in the topology of $Y$ to a random variable $\mu$.
\end{proof}

\section*{Acknowledgements}

C.O. is partially supported by  CNPq
through the grant 460713/2014-0 and FAPESP by the grants 2015/04723-2 and 2015/07278-0. 
This work benefited from the support of the project EDNHS ANR-14-CE25-0011 of the French National Research Agency (ANR). The work of M.S. was also supported  by the Labex CEMPI (ANR-11-LABX-0007-01).

\end{document}